\documentclass{amsart}
\usepackage{amssymb, mathtools}
\usepackage{esint}
\usepackage{enumerate}
\usepackage[breaklinks,colorlinks]{hyperref}
\usepackage{nameref}
\usepackage{cleveref}

\newtheorem{theorem}{Theorem}[section]
\newtheorem{lem}[theorem]{Lemma}
\newtheorem{cor}[theorem]{Corollary}
\newtheorem{prop}[theorem]{Proposition}

\theoremstyle{definition}

\theoremstyle{remark}

\numberwithin{equation}{section}

\crefname{defn}{Definition}{Definitions}
\crefname{propy}{Property}{Properties}
\crefname{eg}{Example}{Examples}
\crefname{prob}{Problem}{Problems}
\crefname{conj}{Conjecture}{Conjectures}
\crefname{known}{Theorem}{Theorems}

\newcommand{\norm}[1]{\left\lVert#1\right\rVert}
\newcommand{\abs}[1]{\left\lvert#1\right\rvert}
\newcommand{\set}[1]{\left\{#1\right\}}
\newcommand{\hin}[2]{\left\langle#1,#2\right\rangle}
\newcommand*{\To}{\to }

\newcommand*{\Rmn}[1]{\uppercase\expandafter{\romannumeral#1}}
\newcommand*{\dif}{\mathop{} \mathrm{d}}

\DeclareMathOperator{\Div}{div}

\allowdisplaybreaks

\title{Liouville theorem for quasi-harmonic sphere}
\author{Jiayu Li}
\address{Jiayu Li, School of Mathematics, Nanjing University, 22 Hankou Road, Nanjing 210093, China}
\email{jiayuli@nju.edu.cn}

\author{Linlin Sun}
\address{School of Mathematics and Computational Science \& Hunan Research Center of the Basic Discipline Fundamental Algorithmic Theory and Novel Computational Methods, Xiangtan University, Xiangtan 411105, China}
\email{sunll@xtu.edu.cn}

\subjclass[2020]{58E20; 53C43}

\keywords{quasi-harmonic sphere; harmonic map heat flow; ancient solution; convex function}
\date{\today}

\begin{document}

\begin{abstract}
Let $N$ be a compact Riemannian manifold whose universal covering supports a convex function. We prove that it admits no quasi-harmonic sphere. No global growth condition is imposed on the convex function.
\end{abstract}

\maketitle

\section{Introduction}

Following Lin and Wang \cite{Lin:1999ab}, a nonconstant smooth map $v:\mathbb R^m\to N$, $m>2$, is called a \emph{quasi-harmonic sphere} if
\begin{equation}\label{eq:shrinker-intro}
 \tau(v)=\dfrac 12 x\cdot \dif v
\end{equation}
and
\begin{equation}\label{eq:finite-gaussian-intro}
 \int_{\mathbb R^m}\abs{\dif v}^2e^{-\abs{x}^2/4}<\infty.
\end{equation}
Equivalently, $v$ is a finite-energy harmonic map from $\left(\mathbb R^m,e^{-\frac {\abs{x}^2}{2(m-2)}}g_0\right)$ to $N$. 

Quasi-harmonic spheres enter the singularity analysis of the harmonic map heat flow \cite{Lin:1999ab}. See also \cite{Ding:2006aa} for related results on equivariant quasi-harmonic spheres. The absence of harmonic spheres and quasi-harmonic spheres plays an important role in the existence and convergence theory for the harmonic map heat flow \cite{Li:2009aa,Lin:1999ab}. Li and Wang proved constancy when the image is contained in a regular ball \cite[Theorem~3.2]{Li:2009aa}.
Li and Zhu excluded quasi-harmonic spheres under a polynomial-growth
strictly convex function on the universal covering \cite{Li:2010aa}, and Li
and Yang treated the corresponding large-energy condition \cite{Li:2012aa}.
In our earlier paper \cite{Li:2016aa}, the large-energy condition was shown to
be equivalent to \eqref{eq:finite-gaussian-intro}, while polynomial growth
was replaced by
\begin{equation}\label{eq:LiSun-growth}
 \widetilde\nabla^2\rho>0,\qquad
 0\le \rho(y)\le C\exp\left(\dfrac 14\widetilde d(y,y_0)^{2/m}\right).
\end{equation}
The question considered here is whether the growth condition in
\eqref{eq:LiSun-growth} can be removed when the image is relatively
compact.

Let $\pi:\widetilde N\to N$ be a Riemannian covering. Following
\cite[Section~2.1]{Jost:2012aa}, we call $\widetilde N$ \emph{convex supporting} if every compact subset of $\widetilde N$ has a neighborhood carrying a $C^2$ function $\tilde\rho$ with $\tilde\nabla^2\tilde\rho>0$.  

Our main result can be stated as follows.

\begin{theorem}\label{thm:shrinker}
If $N$ admits a Riemannian covering $\pi:\widetilde N\to N$ whose total space is convex supporting, then $N$ admits no quasi-harmonic sphere with relatively compact image.
\end{theorem}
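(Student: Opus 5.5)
We argue by contradiction. Suppose $v:\mathbb R^m\to N$ is a quasi-harmonic sphere with $v(\mathbb R^m)$ contained in a compact set $K\subset N$. Since $\mathbb R^m$ is simply connected, $v$ lifts to a smooth map $\tilde v:\mathbb R^m\to\widetilde N$ with $\pi\circ\tilde v=v$. Because $\pi$ is a local isometry, $\tilde v$ still satisfies \eqref{eq:shrinker-intro} and \eqref{eq:finite-gaussian-intro}.

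The first difficulty is that $\tilde v(\mathbb R^m)$ need not be relatively compact in $\widetilde N$, even though its projection lies in $K$. Convex supporting only provides a strictly convex function near each compact subset, so we must localize. We pass to the parabolic picture: $u(x,t)=v(x/\sqrt{-t})$ for $t<0$ is an ancient, self-similar solution of the harmonic map heat flow. Its lift $\tilde u(x,t)=\tilde v(x/\sqrt{-t})$ solves the same flow in $\widetilde N$, and the relevant sets are the images $\tilde u(B_R\times[-T,-1])=\tilde v(\Omega_{R,T})$, where $\Omega_{R,T}\subset B_{R\sqrt T}$. Fixing the region $\{\abs{x}\le R\}$ of $\tilde v$, its image $\tilde v(\overline{B_R})$ is compact, so there is a neighborhood $U_R$ carrying $\tilde\rho_R$ with $\widetilde\nabla^2\tilde\rho_R>0$. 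Composing, the chain rule and \eqref{eq:shrinker-intro} give, on $\tilde v^{-1}(U_R)$,
\begin{equation*}
\Delta(\tilde\rho_R\circ\tilde v)-\tfrac12 x\cdot\nabla(\tilde\rho_R\circ\tilde v)=\widetilde\nabla^2\tilde\rho_R(\dif\tilde v,\dif\tilde v)\ge \lambda_R\abs{\dif\tilde v}^2\ge0 .
\end{equation*}
Thus $f=\tilde\rho_R\circ\tilde v$ is subharmonic for the drift Laplacian $L=\Delta-\frac12x\cdot\nabla$ on $B_R$, which is the generator associated with the Gaussian weight $e^{-\abs{x}^2/4}$.

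Next we exploit the scaling structure. The quantity $x\cdot\dif v$ is the derivative along the dilation field, and \eqref{eq:shrinker-intro} says $v$ is a gradient-like soliton. We compute the Pohozaev/monotonicity identity obtained by testing \eqref{eq:shrinker-intro} against $x\cdot\dif v$ with the Gaussian weight; \eqref{eq:finite-gaussian-intro} justifies integration by parts. The aim is to show that the radial derivative $r\partial_r v$ has controlled Gaussian energy. Equivalently, the parabolic monotonicity of Struwe shows that Gaussian energy on small balls is small. The key step is a maximum principle for $f$ on $B_R$. An $L$-subharmonic function attains its maximum on $\partial B_R$. Choosing $R$ so that the weak $L$-maximum principle holds inside a geodesic ball of $\widetilde N$ that is strictly contained in $U_R$, we want to show that $\tilde v(\overline{B_{R'}})$ stays in a ball about $\tilde v(0)$ whose radius is controlled by the oscillation of $v$ in $K$. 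This radius comes from the compactness of $K$, via the injectivity radius and the deck group acting cocompactly near $\pi^{-1}(K)$, and we need it to be independent of $R'$. That would confine $\tilde v(\mathbb R^m)$ to one compact set $\widetilde K$, which carries a single strictly convex function $\tilde\rho$.

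Once $\tilde v(\mathbb R^m)\subset\widetilde K$ with $\tilde\rho$ bounded and strictly convex there, $f=\tilde\rho\circ\tilde v$ is a bounded $L$-subharmonic function on $\mathbb R^m$. Multiply $Lf\ge\lambda\abs{\dif\tilde v}^2$ by $\phi_R^2e^{-\abs{x}^2/4}$ with a cutoff $\phi_R$. Integrating by parts and using Cauchy--Schwarz together with $\abs{\nabla f}\le C\abs{\dif\tilde v}$ gives
\begin{equation*}
\lambda\int\phi_R^2\abs{\dif\tilde v}^2e^{-\abs{x}^2/4}\le C\int_{B_{2R}\setminus B_R}\abs{\nabla\phi_R}\,\phi_R\,\abs{\dif\tilde v}e^{-\abs{x}^2/4}.
\end{equation*}
Absorbing and letting $R\to\infty$, with \eqref{eq:finite-gaussian-intro} making the right-hand side vanish, yields $\dif\tilde v\equiv0$. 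So $v$ is constant, which is a contradiction. The main obstacle is the confinement step, since unboundedness of $\tilde v$ in $\widetilde N$ is exactly what the earlier growth conditions handled. I would first try to prove it by a continuity/bootstrap argument along the heat flow $\tilde u$ as $t\to0^-$, using the local convex functions on successive compact pieces and the uniform local geometry of $\widetilde N$ over $K$.
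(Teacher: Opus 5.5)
\textbf{There is a genuine gap: the confinement step.} Your final step is fine and essentially matches the paper's conclusion: once $\overline{\tilde v(\mathbb R^m)}$ is compact, you integrate $Lf\ge\lambda|\dif\tilde v|^2$ for $f=\tilde\rho\circ\tilde v$ against a cut-off Gaussian weight. The confinement step you flag, however, is not proved, and the mechanism you sketch cannot prove it.

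\begin{itemize}
\item The function $\tilde\rho_R$ lives only near $\tilde v(\overline{B_R})$. Its convexity constant and sublevel sets depend on $R$, so nothing survives as $R\to\infty$.
\item The maximum principle on $B_R$ bounds $f$ by its values on $\partial B_R$. That is exactly where you have no control.
\item Compactness of $K$ does not bound the diameter of lifted sets. A map winding repeatedly around a noncontractible loop of $N$ has image in $K$ but an unbounded lift. Injectivity radius and cocompactness of the deck group give only small-scale information, and a general Riemannian covering need not even be normal.
\item The Pohozaev identity controls $x\cdot\dif v$ only against the Gaussian weight, so it says nothing about $v$ at infinity.
\item The claim that ``Gaussian energy on small balls is small'' is precisely what cannot be obtained for the given $v$ directly.
\end{itemize}

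\textbf{What the paper uses instead.} The paper gets confinement from quantitative decay at infinity.
\begin{enumerate}
\item It shows $|\dif v(R\theta)|\le C/R$ for $R\ge R_0$, so $\theta\mapsto\tilde v(R\theta)$ is uniformly Lipschitz.
\item It proves the unweighted radial bound $\int_1^\infty r^3\int_{\mathbb S^{\ell-1}}|v_r|^2\,\dif\theta\,\dif r<\infty$. This comes from the local bound on $\int\!\int|w_t|^2$ near $t=0$ for $w(x,t)=v(x/\sqrt{-t})$ (Corollary~\ref{cor:local}).
\item An $L^2$-to-$L^\infty$ argument on the sphere then gives $\sup_{s\ge R,\theta}d_{\widetilde N}(\tilde v(s\theta),\tilde v(R\theta))\to0$.
\item The lifted image is therefore totally bounded in the complete component of $\pi^{-1}(K)$, hence has compact closure.
\end{enumerate}

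\textbf{Where the decay comes from.} The decay is an $\varepsilon$-regularity consequence of the uniform vanishing $\Phi_{\xi,0}(-r^2)\to0$ for $|\xi|=1$. That vanishing is only available after replacing $v$ by a quasi-harmonic sphere of \emph{least} dimension in $K$.
\begin{itemize}
\item This least-dimensional sphere is extracted from the ancient flow of Theorem~\ref{thm:ancient} by blow-down and Lin--Wang dimension reduction.
\item Harmonic spheres in $K$ are excluded first: $\mathbb S^k$, $k\ge2$, is simply connected, so such a sphere lifts with compact image, and a local convex function forces it to be constant.
\item A positive density at $(\xi,0)$ would produce a $\xi$-invariant self-similar limit. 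That limit is either a lower-dimensional quasi-harmonic sphere or harmonic sphere, contradicting minimality, or, for $\ell=3$, a two-dimensional self-similar flow, which Proposition~\ref{prop:moment} forces to be trivial.
\end{itemize}
The reduction to the ancient flow and this minimality argument are the missing ideas. Working with the original $v$ throughout, your proposal has no source for the decay.
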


Compared with \cite{Li:2016aa}, the globally defined strictly convex function satisfying \eqref{eq:LiSun-growth} is replaced by the local convex-supporting condition, at the price of assuming that the image is
relatively compact. This range assumption is automatic when $N$ is closed. Neither $N$ nor the covering space is assumed complete. In particular, no globally defined strictly convex function or growth condition on the local convex functions is required.

\vspace{2ex}

Every quasi-harmonic sphere gives rise to the backward self-similar flow
\begin{equation}\label{eq:selfsimilar-intro}
 u(x,t)=v\left(\dfrac {x}{\sqrt{-t}}\right),\qquad t<0,
\end{equation}
which is an ancient solution of the harmonic map heat flow introduced by
Eells and Sampson \cite{Eells:1964aa},
\begin{align*}
     \partial_tu=\tau(u), \qquad u:\mathbb R^m\times(-\infty,0)\To N.
\end{align*}
Throughout this paper, an \emph{ancient solution} means a smooth solution defined on $\mathbb R^m\times(-\infty,0)$. When weak solutions are considered, this will be stated explicitly. Ancient solutions of the harmonic map heat flow and related variants have also been studied in \cite{Chen:2023aa,Chen:2026aa,Sung:2022aa,Wang:2011aa} and the references therein.

Put
\begin{align*}
    G_r(x)=(4\pi r^2)^{-m/2}e^{-\abs{x}^2/(4r^2)},\qquad r>0.
\end{align*}
The preceding theorem follows from the following Liouville theorem for ancient harmonic map heat flows.

\begin{theorem}\label{thm:ancient}
Let $m\ge2$ and let $u:\mathbb{R}^m\times(-\infty,0)\to N$ be an ancient harmonic map heat flow with $u(\mathbb{R}^m\times(-\infty,0))\subseteq K\subseteq N$, where $K$ is compact. Suppose that $N$ admits a Riemannian covering
$\pi:\widetilde N\to N$ whose total space is convex supporting. If
\begin{align}\label{eq:energybound}
   \sup_{t<0}(-t)\int_{\mathbb{R}^m}\abs{\dif u(x,t)}^2G_{\sqrt{-t}}(x)\dif x<\infty,
\end{align}
then $u$ is constant.
\end{theorem}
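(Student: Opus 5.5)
We will run a blow-down argument combined with the convex-function maximum principle, applied on the universal covering. The first step is to lift. The domain $\mathbb R^m\times(-\infty,0)$ is simply connected, so $u$ lifts to a smooth ancient flow $\tilde u:\mathbb R^m\times(-\infty,0)\to\widetilde N$ with $\pi\circ\tilde u=u$. The energy density is unchanged by the lift, so \eqref{eq:energybound} holds for $\tilde u$ as well.

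The difficulty is that $\tilde u$ need not have relatively compact image, while the convex functions are only available near compact sets. We therefore work with rescalings. Set $u_\lambda(x,t)=u(\lambda x,\lambda^2t)$. Condition \eqref{eq:energybound} is scale invariant. Struwe's monotonicity formula says that the weighted Gaussian energy $\Phi(t)=(-t)\int\abs{\dif u}^2G_{\sqrt{-t}}$, centered at $(0,0)$, is monotone in $t$. Since $\Phi$ is bounded, it has limits as $t\to0^-$ and as $t\to-\infty$. The identity
\[
\Phi(t_1)-\Phi(t_2)=\int_{t_1}^{t_2}\!\!\int 2(-t)\Bigl|\partial_tu+\tfrac{x\cdot\dif u}{2t}\Bigr|^2G
\]
then shows that the blow-down sequences $u_\lambda$ with $\lambda\to\infty$, and the blow-up sequences with $\lambda\to0$, become asymptotically self-similar in the Gaussian $L^2$ sense.

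The core step is a local Liouville statement. Consider a bounded ancient solution, or a weak limit of solutions, that is backward self-similar. It then corresponds to a weakly quasi-harmonic map $v$ with finite Gaussian energy and image in $K$. We want to show it is constant.

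For this we take a lift $\tilde v$ and pick a compact set $\widetilde K\subset\widetilde N$ containing a relevant piece of its image. On a neighborhood of $\widetilde K$ there is a strictly convex $\tilde\rho$. The composite $\tilde\rho\circ\tilde v$ satisfies
\[
\Delta_f(\tilde\rho\circ\tilde v)=\tilde\nabla^2\tilde\rho(\dif\tilde v,\dif\tilde v)\ge c\abs{\dif\tilde v}^2,
\]
where $\Delta_f$ is the drift Laplacian for the Gaussian weight. We then integrate against cutoffs with the Gaussian weight, use that $\tilde\rho$ is bounded on $\widetilde K$, and use the finite weighted energy; this gives $\int\abs{\dif\tilde v}^2e^{-\abs x^2/4}=0$.

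The main obstacle is making this local. We only control $\tilde\rho$ near a compact set, but $\tilde v$ may leave every compact set of $\widetilde N$. The first idea is to use compactness of $K$: the deck group acts cocompactly on $\pi^{-1}(K)$. However, we can only translate the image back by deck transformations, not confine it.

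So we argue on small parabolic cylinders. The $\varepsilon$-regularity theorem for the harmonic map heat flow, together with the monotonicity formula, shows that outside a singular set of small parabolic Hausdorff dimension the limiting flows are smooth with small oscillation. On each such piece, the lift lies in a compact region where a convex function exists.

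We then apply the maximum principle to $\tilde\rho\circ\tilde u$, which is a subsolution of the heat equation with source $\ge c\abs{\dif\tilde u}^2$, and obtain a parabolic Bochner/Caccioppoli estimate. Combining this with a connectedness argument, and with the backward-uniqueness that self-similarity gives, should show the tangent flows at $-\infty$ and at $0$ are constant.

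Once the tangent flows at both ends are constant, $\lim_{t\to-\infty}\Phi=\lim_{t\to0}\Phi=0$. Because $\Phi$ is monotone, $\Phi\equiv0$ for all $t<0$, so $\dif u\equiv0$ and $u$ is constant.

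The hardest step will be proving that the tangent flow at $-\infty$ is constant using only local convex functions. I would try first to show that a weak self-similar limit has image in a single small region. This would come from a Gaussian-weighted oscillation estimate obtained from the energy drop, and after that a single local $\tilde\rho$ would suffice.
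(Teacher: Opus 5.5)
You correctly identify the obstacle: the lift may leave every compact subset of $\widetilde N$. But the proposal never overcomes it. The decisive step is only asserted (``should show''), and the mechanism you suggest would fail. The energy drop $\Phi(-bR^2)-\Phi(-aR^2)\to0$ controls only $x\cdot\dif u+2t\partial_tu$, i.e.\ the defect from self-similarity. It yields no oscillation bound, and the limit profile carries Gaussian energy $L>0$, so there is no reason for its image to lie in a small region. Local Caccioppoli estimates for $\tilde\rho\circ\tilde u$ on small cylinders, each with its own convex function, do not assemble into a global statement either. The constants (oscillation of $\tilde\rho$ against its convexity modulus) are not uniform as the compact piece of $\widetilde N$ varies, and this is why earlier results required a global $\tilde\rho$ with a growth bound.

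What is actually needed is \emph{relative compactness} of the lifted image of a \emph{smooth} profile. The paper proves this by an argument your proposal lacks:
\begin{enumerate}[(i)]
\item Take a nonconstant quasi-harmonic sphere $v:\mathbb R^\ell\to K$ of least dimension $\ell$, produced by blow-down and Lin--Wang dimension reduction.
\item Show that the Gaussian densities of $w(x,t)=v(x/\sqrt{-t})$ at the points $(\xi,0)$, $\abs{\xi}=1$, vanish. Otherwise a tangent flow there is invariant in the direction $\xi$ and gives a lower-dimensional quasi-harmonic sphere; for $\ell=3$ it is ruled out by Proposition~\ref{prop:moment}.
\item Then $\varepsilon$-regularity gives $\abs{\dif v(R\theta)}\le C/R$.
\item The bound $\int_{-1}^0\int_{B_1}\abs{w_t}^2<\infty$ gives $\int_1^\infty r^3\int_{\mathbb S^{\ell-1}}\abs{v_r}^2<\infty$.
\end{enumerate}
Together these give $\sup_{s\ge R}\sup_{\theta}d_{\widetilde N}(\widetilde v(s\theta),\widetilde v(R\theta))\to0$. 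Hence $\widetilde v(\mathbb R^\ell)$ is totally bounded in a complete component of $\pi^{-1}(K)$, and one convex function near its closure plus Gaussian integration by parts finishes.

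Two further steps are missing. First, a constant tangent flow at $-\infty$ forces $L=0$ only if the blow-down converges strongly in $H^1_{\rm loc}$; otherwise energy can concentrate and be lost. Ruling this out requires excluding harmonic spheres $\mathbb S^k\to K$, $k\ge2$. This is easy here: $\mathbb S^k$ is compact and simply connected, so its lift has compact image and a single convex function makes it constant. Lin--Wang compactness then gives strong convergence. Second, the self-similar limit may be singular, and it is not clear that a weak, possibly discontinuous map lifts to $\widetilde N$ at all. The paper sidesteps this by reducing to a smooth quasi-harmonic sphere on the simply connected $\mathbb R^\ell$.

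Finally, the tangent flow as $t\to0^-$ plays no role. Monotonicity gives $0\le\Phi\le L$, so $L=0$ alone implies $\dif u\equiv0$.
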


By \eqref{eq:monotonicity}, condition \eqref{eq:energybound} is equivalent to
\begin{align*}
\lim_{t\to-\infty}(-t)\int_{\mathbb R^m}\abs{\dif u(x,t)}^2G_{\sqrt{-t}}(x)\,\dif x<\infty.
\end{align*}
Moreover, Proposition~\ref{prop:global} shows that, for every $a\in\mathbb R^m$ and $T\in\mathbb R$,
\begin{align*}
 &\lim_{t\to-\infty}(T-t)\int_{\mathbb R^m}\abs{\dif u(x,t)}^2G_{\sqrt{T-t}}(x-a)\,\dif x\\
 =&\lim_{t\to-\infty}(-t)\int_{\mathbb R^m}\abs{\dif u(x,t)}^2G_{\sqrt{-t}}(x)\,\dif x .
\end{align*}
Thus the choice of the space-time center in Theorem~\ref{thm:ancient} is only a normalization.

    The natural-scale bound \eqref{eq:energybound} in Theorem~\ref{thm:ancient} is essential.  For
$N=\mathbb S^1$, the universal cover $\mathbb R$ is convex supporting, but
$u(x,t)=e^{ix_1}$ is a nonconstant stationary flow and
\begin{align*}
(-t)\int_{\mathbb R^m}\abs{\dif u(x,t)}^2G_{\sqrt{-t}}(x)\,\dif x=-t\to \infty \qquad(t\to-\infty).
\end{align*}

\section{Local estimates}

Let $u:\mathbb{R}^m\times(-\infty,0)\to N$ be an ancient  harmonic map heat flow and set
\begin{align*}
    \Phi(t)=\dfrac{-t}{2}\int_{\mathbb{R}^m}\abs{\dif u(x,t)}^2G_{\sqrt{-t}}(x)\dif x.
\end{align*}
We begin with Struwe's monotonicity formula \cite[Lemma~3.2, pp.~489--490]{Struwe:1988aa}.  Since only local boundedness of $\Phi$ is assumed, the cutoff argument is included.

\begin{prop}\label{prop:monotonicity}
If $\Phi$ is locally bounded on $(-\infty,0)$, then it is locally
absolutely continuous and, for $t_1<t_2<0$,
\begin{equation}\label{eq:monotonicity}
 \Phi(t_2)+\int_{t_1}^{t_2}(-t)\int_{\mathbb{R}^m}
 \abs{\partial_tu+\dfrac {1}{2t}x\cdot\dif u}^2G_{\sqrt{-t}}(x)\dif x\dif t
 =\Phi(t_1).
\end{equation}
\end{prop}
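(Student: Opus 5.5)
We multiply the heat flow equation by a weighted radial-type derivative of $u$, integrate against a spatial cutoff, and let the cutoff expand; the identity \eqref{eq:monotonicity} is then recovered in integrated form. Throughout we work at the level of space-time integrals, which is what makes the argument robust with only local boundedness of $\Phi$. Write $G=G_{\sqrt{-t}}(x)$ and note $\partial_t G=-\Delta G$ (backward heat kernel), together with $\nabla G=\frac{x}{2t}G$. Fix $\varphi_R(x)=\varphi(x/R)$ with $\varphi\equiv1$ on $B_1$, $\operatorname{supp}\varphi\subset B_2$, $0\le\varphi\le1$. All integrands are then compactly supported in space, and $u$ is smooth, so every manipulation below is classical.

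\textbf{Step 1: the localized identity.} Define $\Phi_R(t)=\frac{-t}{2}\int\abs{\dif u}^2G\varphi_R^2$. We differentiate in $t$, obtaining
\[
\frac{d}{dt}\Phi_R=-\frac12\int\abs{\dif u}^2G\varphi_R^2+(-t)\int\hin{\dif u}{\nabla\partial_tu}G\varphi_R^2+\frac{-t}{2}\int\abs{\dif u}^2\partial_tG\,\varphi_R^2 .
\]
We then integrate the middle term by parts, using $\partial_tu=\tau(u)$ so that it becomes $-(-t)\int\abs{\partial_tu}^2G\varphi_R^2-(-t)\int\hin{\partial_tu}{\dif u(\nabla(G\varphi_R^2))}$. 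We use $\nabla G=\frac{x}{2t}G$, and treat the last term with the standard Pohozaev/stress-energy computation: integrate $\abs{\dif u}^2\partial_tG=-\abs{\dif u}^2\Delta G$ by parts and use $\nabla^2G=\bigl(\frac{x\otimes x}{4t^2}+\frac{\mathrm{Id}}{2t}\bigr)G$. The exact Struwe computation, with the cutoff present, gives
\[
\frac{d}{dt}\Phi_R=-(-t)\int\abs{\partial_tu+\tfrac1{2t}x\cdot\dif u}^2G\varphi_R^2+E_R(t),
\]
where the error $E_R$ collects every term in which a derivative falls on $\varphi_R^2$. Each such term is bounded by
\[
C(-t)\int_{B_{2R}\setminus B_R}\bigl(\abs{\partial_tu}\abs{\dif u}+\abs{\dif u}^2\abs{x}/\abs{t}\bigr)\abs{\nabla\varphi_R}\varphi_R\,G .
\]
Integrating from $t_1$ to $t_2$ yields the cutoff version of \eqref{eq:monotonicity} with the error $\int_{t_1}^{t_2}E_R$.

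\textbf{Step 2: controlling the error, which is the main obstacle.} We handle $E_R$ with the following estimates.
\begin{itemize}
\item[(a)] The $\partial_tu$ terms are absorbed by Cauchy--Schwarz: write $\partial_tu=(\partial_tu+\frac{x}{2t}\cdot\dif u)-\frac{x}{2t}\cdot\dif u$. Half of the good square is absorbed, and what remains is bounded by $C\int(-t)\abs{\dif u}^2\bigl(\abs{x}^2/t^2\bigr)\abs{\nabla\varphi_R}^2G$ plus similar terms.
\item[(b)] On the support of $\nabla\varphi_R$ we have $\abs{\nabla\varphi_R}\le C/R$ and $\abs{x}\le 2R$. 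Every error term is therefore bounded by $C(1+(-t)^{-1})\,(-t)\int_{\abs{x}\ge R}\abs{\dif u}^2G$ on $[t_1,t_2]$.
\item[(c)] Local boundedness of $\Phi$ gives $\int_{t_1}^{t_2}(-t)\int\abs{\dif u}^2G<\infty$. Dominated convergence then sends the error to $0$ as $R\to\infty$.
\end{itemize}
The point requiring care is exactly this: we need time-integrated, rather than pointwise, control. That is why we only integrate the identity between $t_1$ and $t_2$ and do not differentiate the limit.

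\textbf{Step 3: passing to the limit.} We let $R\to\infty$. By monotone convergence, $\Phi_R(t_i)\to\Phi(t_i)$, which is finite. The good term converges monotonically to the double integral in \eqref{eq:monotonicity}. Since it is bounded by $\Phi(t_1)$ plus a vanishing error, it is finite, and \eqref{eq:monotonicity} follows. Finally, local absolute continuity is immediate from the resulting identity $\Phi(t)=\Phi(t_1)-\int_{t_1}^{t}f(s)\dif s$ with $f\in L^1_{\mathrm{loc}}$.
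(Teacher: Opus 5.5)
Your proposal follows the paper's proof essentially step for step. Both use the cutoff version of Struwe's identity, whose error is exactly $-2(-t)\int\varphi_RG\hin{A}{\dif u(\nabla\varphi_R)}-\frac12\int\varphi_R(x\cdot\nabla\varphi_R)\abs{\dif u}^2G$ with $A=\partial_tu+\frac1{2t}x\cdot\dif u$. Both then use Young's inequality plus monotone convergence to show the dissipation $D=\int_{t_1}^{t_2}(-t)\int\abs{A}^2G$ is finite, and local boundedness of $\Phi$ with dominated convergence to kill the Gaussian energy tail.

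One step is compressed and, as written, does not give the identity. After absorbing half of the good square you only get $\Phi(t_2)+\frac12 D\le\Phi(t_1)$ and $D<\infty$. Your claim in (b) that $E_R$ itself is bounded by the energy tail is not right, because $E_R$ contains the cross term with $A$ (equivalently with $\partial_tu$). So what (c) sends to zero is only the tail part, and equality does not yet follow.

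You need a second pass, which is what the paper does. Once $D<\infty$ is known, return to the exact cutoff identity and bound the cross term by Cauchy--Schwarz: $2\bigl(\int_{t_1}^{t_2}(-t)\int\abs{A}^2G\varphi_R^2\bigr)^{1/2}\bigl(\int_{t_1}^{t_2}(-t)\int\abs{\dif u}^2\abs{\nabla\varphi_R}^2G\bigr)^{1/2}\le\frac CR D^{1/2}\bigl(\int_{t_1}^{t_2}\Phi\bigr)^{1/2}\to0$. Alternatively, note that the absorbed piece lives on $B_{2R}\setminus B_R$ and is therefore a tail of the convergent integral $D$. With this addition your argument is complete.
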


\begin{proof}
Fix $t_1<t_2<0$ and write $G=G_{\sqrt{-t}}$.
The local energy and stress identities are
\begin{align*}
\dfrac 12\partial_t\abs{\dif u}^2
 =\sum_i\partial_i\hin{\partial_tu}{u_i}-\abs{\partial_tu}^2, \quad \sum_i\partial_i\left(\hin{u_i}{u_k}-\dfrac 12\abs{\dif u}^2\delta_{ik}\right)=\hin{\partial_tu}{u_k}.
\end{align*}
Since $\partial_iG=x_iG/(2t)$ and
$\partial_t((-t)G)=((m-2)/2+\abs{x}^2/(4t))G$, these identities above give
\begin{align*}
    &\partial_t\left(\dfrac {-t}{2}\abs{\dif u}^2G\right)
 +(-t)G\abs{\partial_tu+\dfrac {1}{2t}x\cdot\dif u}^2\\
 =&\Div\left[(-t)G\hin{\partial_tu+\dfrac {1}{2t}x\cdot\dif u}{\dif u}+\dfrac x4\abs{\dif u}^2G\right].
\end{align*}
Choose $\eta_R\in C_0^\infty\left(B_{2R}\right)$ satisfies
\begin{align*}
    \eta_R\vert_{B_R}=1,\quad 0\leq\eta_R\leq 1,\quad \abs{\nabla\eta_R}\leq C/R.
\end{align*}
Then
\begin{align*}
    &\dfrac{\dif}{\dif t}\left(\dfrac{-t}{2}\int_{\mathbb{R}^m}\abs{\dif u}^2G\eta_R^2\right)
 +(-t)\int_{\mathbb{R}^m}\abs{u_t+\dfrac{1}{2t}x\cdot\dif u}^2G\eta_R^2\\
=&-2(-t)\int_{\mathbb{R}^m}\eta_RG
 \hin{u_t+\dfrac{1}{2t}x\cdot\dif u}{du(\nabla\eta_R)}-\dfrac12\int_{\mathbb{R}^m}\eta_R(x\cdot\nabla\eta_R)\abs{\dif u}^2G.
\end{align*}
Integrating in time and applying Young's inequality gives
\begin{align*}
    \dfrac12\int_{t_1}^{t_2}(-t)\int_{\mathbb{R}^m}
 \abs{u_t+\dfrac{1}{2t}x\cdot\dif u}^2G\eta_R^2\le\Phi(t_1)
 +C\int_{t_1}^{t_2}\int_{B_{2R}\setminus B_R}
 \left(\dfrac{-t}{R^2}+1\right)\abs{\dif u}^2G.
\end{align*}
Monotone convergence proves finiteness of the full dissipation integral.
The boundary errors now satisfy
\begin{align*}
    &\abs{2\int_{t_1}^{t_2}(-t)\int_{\mathbb{R}^m}\eta_RG
 \hin{u_t+\dfrac{1}{2t}x\cdot\dif u}{du(\nabla\eta_R)}}\\
 \le&\dfrac C R\left(\int_{t_1}^{t_2}(-t)\int_{\mathbb{R}^m}\abs{u_t+\dfrac{1}{2t}x\cdot\dif u}^2G\right)^{1/2}
 \left(\int_{t_1}^{t_2}\Phi(t)\dif t\right)^{1/2}\to 0,
\end{align*}
and
\begin{align*}
\abs{\dfrac12\int_{t_1}^{t_2}\int_{\mathbb{R}^m}\eta_R(x\cdot\nabla\eta_R)\abs{\dif u}^2G}\le C\int_{t_1}^{t_2}\int_{\mathbb{R}^m\setminus B_R}\abs{\dif u}^2G
\to 0.
\end{align*}
Letting $R\to\infty$ proves \eqref{eq:monotonicity} and local absolute
continuity.
\end{proof}

\begin{prop}\label{prop:moment}
If $\Phi$ is locally bounded on $(-\infty,0)$, then for almost every $t<0$,
\begin{equation}\label{eq:moment}
 \int\abs{x}^2\abs{\dif u}^2G_{\sqrt{-t}}(x)\dif x \le8(m-2)\Phi(t)
 +4\int\abs{x\cdot  du+2tu_t}^2G_{\sqrt{-t}}(x)\dif x.
\end{equation}
\end{prop}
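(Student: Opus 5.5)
The plan is a Pohozaev-type argument: test the stress identity from the proof of Proposition~\ref{prop:monotonicity} against the vector field $x_kG$, with $G=G_{\sqrt{-t}}$, and then absorb the cross term. Fix $t<0$ with $\Phi(t)<\infty$ and
\[
\int_{\mathbb R^m}\abs{u_t+\tfrac{1}{2t}x\cdot\dif u}^2G\,\dif x<\infty .
\]
By Proposition~\ref{prop:monotonicity} the dissipation integral is finite, so this holds for almost every $t$. Note also that $\abs{x\cdot\dif u+2tu_t}^2=4t^2\abs{u_t+\frac1{2t}x\cdot\dif u}^2$.

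\textbf{Step 1: the formal identity.} Multiply
\[
\sum_i\partial_i\Bigl(\hin{u_i}{u_k}-\tfrac12\abs{\dif u}^2\delta_{ik}\Bigr)=\hin{u_t}{u_k}
\]
by $x_kG\eta_R^2$, sum over $k$ and integrate by parts. Use $\partial_i(x_kG)=\delta_{ik}G+x_ix_kG/(2t)$, and set $s=-t$. Ignoring the cutoff for the moment, this gives
\[
\int\abs{x}^2\abs{\dif u}^2G=2(m-2)s\int\abs{\dif u}^2G+2\int\abs{x\cdot\dif u}^2G+4t\int\hin{u_t}{x\cdot\dif u}G .
\]
The first term on the right equals $4(m-2)\Phi(t)$.

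\textbf{Step 2: absorption.} Put $a=x\cdot\dif u$ and $b=2tu_t$. The last two terms combine to $2\abs{a}^2+2\hin ab=2\hin{a}{a+b}$. By Young's inequality,
\[
2\hin{a}{a+b}\le\tfrac12\abs{a}^2+2\abs{a+b}^2\le\tfrac12\abs{x}^2\abs{\dif u}^2+2\abs{x\cdot\dif u+2tu_t}^2 .
\]
Write $X=\int\abs{x}^2\abs{\dif u}^2G$. Then $X\le4(m-2)\Phi+\frac12X+2\int\abs{a+b}^2G$. If $X$ is finite, this yields $X\le 8(m-2)\Phi+4\int\abs{x\cdot\dif u+2tu_t}^2G$, which is \eqref{eq:moment}.

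\textbf{Step 3: the cutoff (main obstacle).} Finiteness of $X$ is not known a priori, so the argument must be run with $X_R=\int\abs{x}^2\abs{\dif u}^2G\eta_R^2$. This quantity is finite because $\eta_R$ has compact support and $u$ is smooth.

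With the cutoff, all terms above acquire the weight $\eta_R^2$. There are also new error terms from derivatives falling on $\eta_R^2$, namely
\[
\int\Bigl(\hin{\dif u(\nabla\eta_R^2)}{x\cdot\dif u}-\tfrac12\abs{\dif u}^2\,x\cdot\nabla\eta_R^2\Bigr)G .
\]
Since $\abs{x}\abs{\nabla\eta_R}\le C$ on $B_{2R}\setminus B_R$ and vanishes elsewhere, these are bounded by $C\int_{B_{2R}\setminus B_R}\abs{\dif u}^2G$. That tends to $0$ because $\Phi(t)<\infty$.

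The absorption of Step 2 then gives
\[
X_R\le 8(m-2)\Phi(t)+4\int\abs{x\cdot\dif u+2tu_t}^2G+C\int_{\mathbb R^m\setminus B_R}\abs{\dif u}^2G .
\]
Letting $R\to\infty$ and using monotone convergence on the left completes the proof. For $m=2$ the $\Phi$ term simply drops out, consistent with the statement.
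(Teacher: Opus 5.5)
Your proposal is correct and is essentially the paper's proof. The paper tests the $G$-weighted stress--energy identity \eqref{eq:stress} with $X=x\eta_R^2$, which is the same as your testing of the unweighted identity against $x_kG\eta_R^2$; it then applies the same Young absorption, sends the annular error to zero using $\Phi(t)<\infty$, and passes to the limit on the left-hand side. Your constants $8(m-2)$ and $4$ agree with the paper's.
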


\begin{proof}
Equation \eqref{eq:monotonicity} gives
$\int\abs{x\cdot\dif u+2tu_t}^2G_{\sqrt{-t}}(x)<\infty$ for almost every $t<0$.
Fix such a time and write $G=G_{\sqrt{-t}}$. For
$X\in C_0^1(\mathbb{R}^m,\mathbb{R}^m)$, the conservation law for the stress–energy tensor gives
\begin{align}\label{eq:stress}
\begin{split}
     &\int_{\mathbb{R}^m}\left[
 \left(\Div X+\dfrac{1}{2t}x\cdot X\right)\abs{\dif u}^2
 -2\sum_{i,k}\hin{\partial_i u}{\partial_k u}\partial_iX^k\right]G\dif x\\
 =&\dfrac1t\int_{\mathbb{R}^m}\hin{ x\cdot\dif u+2tu_t}{\dif u(X)} G\dif x.
\end{split}
\end{align}
Take $X=x\eta_R^2$, with the cutoffs $\eta_R$ above. Young's inequality and
$\abs{x}\abs{\nabla\eta_R}\le C$ give
\begin{align*}
 \dfrac1{4(-t)}\int_{\mathbb{R}^m}\abs{x}^2\abs{\dif u}^2\eta_R^2G
 \le&\dfrac{2(m-2)}{-t}\Phi(t)
 +\dfrac1{-t}\int_{\mathbb{R}^m}\abs{x\cdot\dif u+2tu_t}^2G\\
 &+C\int_{B_{2R}\setminus B_R}\abs{\dif u}^2G.
\end{align*}
The last term tends to zero as  $R\to\infty$. Monotone convergence proves \eqref{eq:moment}.
\end{proof}
For $a\in\mathbb{R}^m$, $T\in\mathbb{R}$, and $t<\min\set{T,0}$, write
\begin{align*}
    \Phi_{a,T}(t)=\dfrac{T-t}{2}\int_{\mathbb{R}^m} \abs{\dif u(x,t)}G_{\sqrt{T-t}}(x-a)\dif x.
\end{align*}
The change of variables
$\widehat u(x,s)=u(\lambda x+a,\lambda^2s+T)$ gives
\begin{align*}
    \partial_s\widehat u=\tau(\widehat u),\qquad \dfrac{-s}{2}\int_{\mathbb{R}^m}\abs{\dif \widehat u(x,s)}^2G_{\sqrt{-s}}(x)\dif x =\Phi_{a,T}(\lambda^2s+T).
\end{align*}
Applying the computation in Proposition~\ref{prop:monotonicity} on compact time intervals where $\widehat u$ is defined shows that $\Phi_{a,T}$ is nonincreasing whenever it is locally bounded.

\begin{prop}\label{prop:global}
Assume $\sup_{t<0}\Phi(t)<\infty$. Then the finite limit $L=\lim\limits_{t\to-\infty}\Phi(t)$ exists and for every fixed $a\in\mathbb R^m, T\in\mathbb R$,
\begin{equation}\label{eq:all-center-limit}
\lim_{t\to-\infty}\Phi_{a,T}(t)=L.
\end{equation}
\end{prop}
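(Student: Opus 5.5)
The plan is to get the limit $L$ from monotonicity, rescale so that the two space-time centres merge, and then compare the two Gaussian weights at one fixed time. The tail of that comparison is where I expect the difficulty.

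\emph{Step 1: existence of $L$.} Since $\sup_{t<0}\Phi(t)<\infty$, Proposition~\ref{prop:monotonicity} applies and $\Phi$ is nonincreasing in $t$. So $\Phi(t)$ increases, and stays bounded, as $t\to-\infty$, and $L=\lim_{t\to-\infty}\Phi(t)$ exists and is finite. Moreover \eqref{eq:monotonicity} gives, for each fixed $\mu>1$,
\begin{align*}
\int_{\mu t}^{t}(-s)\int_{\mathbb R^m}\abs{\partial_su+\dfrac{1}{2s}x\cdot\dif u}^2G_{\sqrt{-s}}\,\dif x\,\dif s=\Phi(\mu t)-\Phi(t)\longrightarrow0\qquad(t\to-\infty).
\end{align*}
This is the vanishing of the self-similarity defect on dyadic time blocks.

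\emph{Step 2: rescaling.} Set $u_\lambda(x,s)=u(\lambda x,\lambda^2 s)$. Then $\Phi^{u_\lambda}(s)=\Phi(\lambda^2 s)$, and $\Phi_{a,T}(\lambda^2s)$ equals the analogous quantity for $u_\lambda$ with centre $(a/\lambda,T/\lambda^2)$ at time $s$. The claim therefore becomes the following: for $s$ in the fixed window $[-2,-1]$, the localized energies of $u_\lambda$ with centre $(0,0)$ and with centre $(a_\lambda,T_\lambda)=(a/\lambda,T/\lambda^2)\to(0,0)$ have the same limit as $\lambda\to\infty$. Monotonicity of both $\Phi$ and $\Phi_{a,T}$ lets us replace pointwise values by time averages over $[-2,-1]$. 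This is useful because, by Step 1, the rescaled dissipation $\int_{-2}^{-1}\int\abs{x\cdot\dif u_\lambda+2s\partial_su_\lambda}^2G\to0$.

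\emph{Step 3: comparing the weights.} Write $W_\lambda(x,s)=(T_\lambda-s)G_{\sqrt{T_\lambda-s}}(x-a_\lambda)\big/\bigl((-s)G_{\sqrt{-s}}(x)\bigr)$. On each ball $B_R$, $W_\lambda\to1$ uniformly in $s\in[-2,-1]$. The bulk contribution therefore converges to the same value for both centres, and it remains to show that the tail contributions from $\mathbb R^m\setminus B_R$ are small uniformly in $\lambda$.

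For the weight centred at the origin, the tail is controlled by Proposition~\ref{prop:moment}. Averaging \eqref{eq:moment} over $[-2,-1]$ bounds $\int\abs{x}^2\abs{\dif u_\lambda}^2G$ by $C\sup\Phi+o(1)$, so the tail beyond $B_R$ is $O(R^{-2})$.

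For the shifted weight, I would first use the elementary inequality $\abs{x-a}^2\ge\abs{x}^2/(1+\varepsilon)-\abs{a}^2/\varepsilon$. This dominates it by $(1+\varepsilon)^{m/2}e^{\abs{a_\lambda}^2/(4\varepsilon)}$ times a Gaussian centred at $0$ of scale $(1+\varepsilon)(T_\lambda-s)$. The exponential factor tends to $1$ because $a_\lambda\to0$.

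\emph{Main obstacle.} The dominating Gaussian in Step 3 is slightly wider than the one whose moment Proposition~\ref{prop:moment} controls at the same time. I would handle this through monotonicity in time. The wider Gaussian at time $s$ is exactly the natural weight for centre $(0,S)$ with $S=s+(1+\varepsilon)(T_\lambda-s)$, i.e.\ the natural scale at the earlier time $(1+\varepsilon)s$ measured from centre $0$. So I would apply the moment estimate and Step 1 on the longer window $[-2(1+\varepsilon),-1]$, and use that the dissipation still vanishes there; this should give the tail bound with a constant independent of $\lambda$. Letting $\lambda\to\infty$, then $R\to\infty$, then $\varepsilon\to0$ yields $\limsup\Phi_{a,T}\le L$.

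The reverse inequality $\liminf\Phi_{a,T}\ge L$ follows by interchanging the roles of $(0,0)$ and $(a,T)$. This is legitimate because the upper bound already shows that $\Phi_{a,T}$ is bounded for large negative $t$, so Proposition~\ref{prop:monotonicity} and Proposition~\ref{prop:moment} apply to the flow re-centred at $(a,T)$. Making this tail control uniform is the step I expect to require the most care.
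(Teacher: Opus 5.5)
There is a genuine gap, and it is at the step you flag. After the $\varepsilon$-enlargement you need a bound, uniform in $\lambda$, on $\int_{\abs{x}\ge R}\abs{\dif u_\lambda(x,s)}^2G_{\sqrt{(1+\varepsilon)(T_\lambda-s)}}(x)\,\dif x$. This is the gradient \emph{at time $s$} integrated against a Gaussian wider than $G_{\sqrt{-s}}$.

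Proposition~\ref{prop:moment} applied at the earlier time $(1+\varepsilon)s$ only bounds $\int\abs{x}^2\abs{\dif u_\lambda(x,(1+\varepsilon)s)}^2G_{\sqrt{-(1+\varepsilon)s}}\,\dif x$. That is a different time slice, and nothing in your argument transports gradient mass from time $(1+\varepsilon)s$ to time $s$. The vanishing defect only controls how $u$ varies along the rays $x\mapsto\sqrt{\mu}\,x$. Even for an exactly self-similar $u=V(x/\sqrt{-s})$, the two quantities are multiples of $\int\abs{\dif V}^2G_{\sqrt{1+\varepsilon}}$ and $\int\abs{\dif V}^2G_1$ respectively, so you would be bounding the wrong functional.

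You also cannot use the monotonicity formula centred at $(0,S)$ with $S>0$, because local boundedness of $\Phi_{0,S}$ is exactly what is not yet known. The same objection applies to your use of monotonicity of $\Phi_{a,T}$ in Step~2 when $T\ge0$. Since $W_\lambda$ is unbounded in $x$ whenever $T>0$, or $T=0$ and $a\ne0$, some new input is needed in those cases.

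The paper supplies that input in two moves.

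\emph{Case $T<0$.} No enlargement is needed here. The exponent of the weight ratio $K_h$ is a \emph{negative} quadratic in $x$, so $K_h\le C(a,T)$ and the shifted tail is dominated by the centred one. Your bulk/tail splitting with the moment bound then gives both local boundedness of $\Phi_{a,T}$ and $\Phi_{a,T}\to L$.

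\emph{Cases $T\ge0$.} These are reduced to $T<0$ by the heat-kernel semigroup identity, which writes the wider Gaussian as an average of narrower shifted ones:
\[
\Phi_{a,0}(t)=\dfrac{-t}{S-t}\int_{\mathbb R^m}\Phi_{b,S}(t)\,G_{\sqrt{-S}}(b-a)\,\dif b,\qquad S<0 .
\]
Here $0\le\Phi_{b,S}\le L$ by monotonicity, so dominated convergence concludes. The case $T>0$ is reduced to $T=0$ in the same way.

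Alternatively, you could keep your splitting and fix only the tail. The $T<0$ case gives $\int_{B_r(b)}\abs{\dif u(\cdot,t)}^2\le C_mLr^{m-2}$ for all $b$ and all $r<\sqrt{-t}$. Covering large balls by such balls gives polynomial growth of the energy, which controls the tail of any fixed Gaussian.

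Finally, the lower bound $\liminf\Phi_{a,T}\ge L$ needs no interchange of centres. It follows from the bulk comparison and the tail of the origin-centred weight alone. An interchange would, for $T<0$, reproduce the same wider-Gaussian problem.
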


\begin{proof}
Monotonicity gives the finite limit $L$ and
\begin{align*}
    \int_1^\infty\dfrac 1{4h}\int
 \abs{x\cdot\dif u(x,-h)-2h\partial_tu(x,-h)}^2G_{\sqrt h}(x)\dif x\dif h =L-\Phi(-1)<\infty.
\end{align*}
Choose $h_i\to\infty$ in the full-measure set where
\eqref{eq:moment} holds and such that
\begin{align*}
     \int\abs{x\cdot\dif u(x,-h_i)-2h_i \partial_tu(x,-h_i)}^2G_{\sqrt{h_i}}(x)\dif x\to0.
\end{align*}
By Proposition~\ref{prop:moment},
\begin{align*}
    \sup_i\int \abs{x}^2\abs{\dif u(x,-h_i)}^2G_{\sqrt{h_i}}(x)\dif x<\infty.
\end{align*}

First suppose $T<0$.  For $h>-T$ put
\begin{align*}
    K_h(x)=\dfrac {(h+T)G_{\sqrt{h+T}}(x-a)}{hG_{\sqrt h}(x)}.
\end{align*}
A direct calculation gives
\begin{align*}
     K_h(x)=(1+T/h)^{1-m/2}
 \exp\set{\dfrac{T}{4h(h+T)} \abs{x+\dfrac hT a}^2-\dfrac{\abs{a}^2}{4T}}.
\end{align*}
For every compact interval of $h$ contained in $(-T,\infty)$, the negative quadratic term in the exponent shows that $K_h$ is uniformly
bounded; hence $\Phi_{a,T}$ is locally bounded and the centered
monotonicity formula applies.  In particular, for all sufficiently large
$h$, $K_h\le C(a,T)$.  Moreover, for each fixed $R>0$,
\begin{align*}
    \sup_{\abs{x}\le R\sqrt h}\abs{K_h(x)-1}\to0
\end{align*}
Using this bound and splitting at
$\abs{x}=R\sqrt{h_i}$,
\begin{align*}
 \abs{\Phi_{a,T}(-h_i)-\Phi(-h_i)}
 \le&L\sup_{\abs{x}\le R\sqrt{h_i}}\abs{K_{h_i}(x)-1}\\
 &\quad+\dfrac {C(a,T)}{R^2} \int \abs{x}^2\abs{\dif u(x,-h_i)}^2G_{\sqrt{h_i}}(x)\dif x.
\end{align*}
Letting first $i\to\infty$ and then $R\to\infty$ gives
\begin{align*}
     \Phi_{a,T}(-h_i)\to L.
\end{align*}

Fix now $S<0$.  For $t<S$, the Gaussian semigroup identity gives
\begin{align*}
G_{\sqrt{-t}}(x-a)=\int_{\mathbb R^m}G_{\sqrt{S-t}}(x-b)G_{\sqrt{-S}}(b-a)\dif b,
\end{align*}
and hence
\begin{align*}
 \Phi_{a,0}(t)
 =\dfrac {-t}{S-t}\int_{\mathbb R^m}\Phi_{b,S}(t)G_{\sqrt{-S}}(b-a)\dif b.
\end{align*}
For every fixed $b$, the first part gives $\Phi_{b,S}(t)\to L$ as $t\to-\infty$, and monotonicity gives $0\le\Phi_{b,S}(t)\le L$.  The same identity shows that $\Phi_{a,0}$ is locally bounded on $(-\infty,0)$, so its centered monotonicity formula is legitimate.  Dominated convergence yields
\begin{align*}
    \lim_{t\to-\infty}\Phi_{a,0}(t)=L.
\end{align*}

Finally let $T>0$.  Again by the Gaussian semigroup identity,
\begin{align*}
    \Phi_{a,T}(t)
 =\left(1+\dfrac {T}{-t}\right)
 \int_{\mathbb R^m}\Phi_{b,0}(t)G_{\sqrt T}(b-a)\dif b.
\end{align*}
The $T=0$ case and monotonicity give
$0\le\Phi_{b,0}(t)\le L$ and
$\Phi_{b,0}(t)\to L$ for every fixed $b$.  Thus the right-hand side is
finite and locally bounded in $t$; consequently $\Phi_{a,T}$ also satisfies the centered monotonicity formula.  Dominated convergence proves
\eqref{eq:all-center-limit} for $T>0$ as well.
\end{proof}

\begin{cor}\label{cor:local}
Under the hypotheses of Proposition~\ref{prop:global},
\begin{equation}\label{eq:local-energy}
 \int_{B_R(a)}\abs{\dif  u(x,t)}^2\dif x\le C_mLR^{m-2} \qquad(a\in\mathbb{R}^m,\ t<0,\ R>0),
\end{equation}
and
\begin{equation}\label{eq:tail}
 \dfrac {-t}{2}\int_{\abs{x}\ge R\sqrt{-t}}\abs{\dif  u(x,t)}^2G_{\sqrt{-t}}(x)\dif x \le2^{m/2-1}Le^{-R^2/8}.
\end{equation}
For $s_1<s_2\le0$,
\begin{equation}\label{eq:time-energy}
\int_{s_1}^{s_2}\int_{B_R(a)}\abs{\partial_tu}^2\dif x\dif t \le C_mL\set{R^{m-2}+(s_2-s_1)R^{m-4}}.
\end{equation}
At $s_2=0$ the integral is taken over the open time interval.
\end{cor}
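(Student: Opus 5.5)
All three estimates come from one observation: by Proposition~\ref{prop:global}, each $\Phi_{a,T}$ is nonincreasing in $t$ and tends to $L$ as $t\to-\infty$. Hence
\begin{align*}
\Phi_{a,T}(t)\le L\qquad\text{for all } a\in\mathbb R^m,\ T\in\mathbb R,\ t<\min\set{T,0}.
\end{align*}
The plan is to turn this uniform Gaussian bound into localized bounds by choosing the center and the scale suitably.

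\emph{Local energy bound \eqref{eq:local-energy}.} Fix $a$, $t<0$ and $R>0$, and take $T=t+R^2$, so that $T-t=R^2$. If $T\ge 0$, the condition $t<\min\set{T,0}$ still holds because $t<0$. On $B_R(a)$ we have
\begin{align*}
G_R(x-a)\ge (4\pi R^2)^{-m/2}e^{-1/4}.
\end{align*}
Therefore
\begin{align*}
\frac{R^2}{2}(4\pi R^2)^{-m/2}e^{-1/4}\int_{B_R(a)}\abs{\dif u}^2\le \Phi_{a,T}(t)\le L,
\end{align*}
which gives $\int_{B_R(a)}\abs{\dif u}^2\le C_mLR^{m-2}$.

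\emph{Tail bound \eqref{eq:tail}.} On $\set{\abs{x}\ge R\sqrt{-t}}$ write
\begin{align*}
G_{\sqrt{-t}}(x)=2^{m/2}e^{-\abs{x}^2/(8(-t))}\,G_{\sqrt{-2t}}(x)\le 2^{m/2}e^{-R^2/8}\,G_{\sqrt{-2t}}(x).
\end{align*}
Now $G_{\sqrt{-2t}}$ is the kernel centered at $(0,T)$ with $T=-t>0$, since $T-t=-2t$. Multiplying by $\frac{-t}{2}=\frac12\cdot\frac{T-t}{2}$ gives
\begin{align*}
\frac{-t}{2}\int_{\abs{x}\ge R\sqrt{-t}}\abs{\dif u}^2G_{\sqrt{-t}}\le 2^{m/2}e^{-R^2/8}\cdot\frac12\,\Phi_{0,-t}(t)\le 2^{m/2-1}Le^{-R^2/8}.
\end{align*}
The constants match the statement exactly. (Here $\Phi_{a,T}$ is read with $\abs{\dif u}^2$; the missing square in its displayed definition is a typo.)

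\emph{Time-derivative bound \eqref{eq:time-energy}.} This is the main step. I would use the localized energy identity $\frac12\partial_t\abs{\dif u}^2=\Div\hin{u_t}{\dif u}-\abs{u_t}^2$, tested against $\eta^2$ with $\eta\in C_0^\infty(B_{2R}(a))$, $\eta=1$ on $B_R(a)$ and $\abs{\nabla\eta}\le C/R$. Integrating over $[s_1,s_2']$ with $s_2'<s_2$ and absorbing the cross term $2\int\eta\hin{u_t}{\dif u(\nabla\eta)}$ by Young's inequality gives
\begin{align*}
\frac12\int_{s_1}^{s_2'}\int\abs{u_t}^2\eta^2\le \frac12\int_{B_{2R}(a)}\abs{\dif u(s_1)}^2+\frac{C}{R^2}\int_{s_1}^{s_2'}\int_{B_{2R}(a)}\abs{\dif u}^2.
\end{align*}
By \eqref{eq:local-energy} the right-hand side is at most $C_mL\set{R^{m-2}+(s_2'-s_1)R^{m-4}}$. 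Letting $s_2'\uparrow s_2$, by monotone convergence, covers the case $s_2=0$ on the open interval.

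The only point needing care is that every center actually used satisfies $t<\min\set{T,0}$, so that the bound $\Phi_{a,T}\le L$ from Proposition~\ref{prop:global} applies. Everything else is routine.
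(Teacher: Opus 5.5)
Your proposal is correct and follows the paper's proof essentially step for step. It uses the same center $\Phi_{a,t+R^2}(t)$ with the lower bound on $G_R(x-a)$ over $B_R(a)$, the same comparison $G_{\sqrt{-t}}\le 2^{m/2}e^{-R^2/8}G_{\sqrt{-2t}}$ with center $(0,-t)$, and the same cutoff energy inequality integrated in time, with monotone convergence as $s_2\uparrow 0$. Your bookkeeping $\frac{-t}{2}\int\abs{\dif u}^2G_{\sqrt{-2t}}=\frac12\Phi_{0,-t}(t)$ is in fact the accurate one: the paper's display omits this factor $\frac12$, but its stated constant $2^{m/2-1}$ agrees with yours, and you are right that the missing square in the definition of $\Phi_{a,T}$ is a typo.
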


\begin{proof}
Since $G_R(x-a)\ge(4\pi)^{-m/2}e^{-1/4}R^{-m}$ on $B_R(a)$,
\begin{align*}
     \dfrac {R^2}{2}(4\pi)^{-m/2}e^{-1/4}R^{-m} \int_{B_R(a)}\abs{\dif  u(x,t)}^2\dif x\le\Phi_{a,t+R^2}(t)\le L.
\end{align*}
On $\abs{x}\ge R\sqrt{-t}$,
\begin{align*}
   G_{\sqrt{-t}}(x)\le2^{m/2}e^{-R^2/8}G_{\sqrt{-2t}}(x),\\
 \dfrac{-t}{2}\int\abs{\dif  u(x,t)}^2G_{\sqrt{-2t}}(x)\dif x=\Phi_{0,-t}(t)\le L.
\end{align*}
These prove \eqref{eq:local-energy} and \eqref{eq:tail}. Take $\eta\in C_0^\infty(B_{2R}(a))$, with $0\le\eta\le1$, $\eta=1$ on $B_R(a)$ and $\abs{\dif\eta}\le C/R$. The local energy identity gives
\begin{align*}
\dfrac {\dif}{\dif t}\int\eta^2\abs{\dif u}^2+\int\eta^2\abs{\partial_tu}^2 \le4\int\abs{\dif u(\nabla\eta)}^2.
\end{align*}
For $s_1<s_2<0$, integration and \eqref{eq:local-energy} yield the estimate below; monotone convergence then permits $s_2\uparrow0$:
\begin{align*}
\int_{s_1}^{s_2}\int_{B_R(a)}\abs{\partial_tu}^2\le&\int_{B_{2R}(a)}\abs{\dif u(x,s_1)}^2\dif x +\dfrac C{R^2}\int_{s_1}^{s_2}\int_{B_{2R}(a)}\abs{\dif u}^2\\
 \le& C_mL\{R^{m-2}+(s_2-s_1)R^{m-4}\}.
\end{align*}
\end{proof}

\begin{prop}\label{prop:gaussian-local-equivalence}
For an ancient harmonic map heat flow, the following two conditions are equivalent:
\begin{equation}\label{eq:gaussian-condition}
 \sup_{t<0}\Phi(t)<\infty,
\end{equation}
and
\begin{equation}\label{eq:local-spacetime-condition}
 \sup_{a\in\mathbb R^m,\ t_0<0,\ r>0}
 \set{r^{-m}\int_{t_0-4r^2}^{t_0} \int_{B_{2r}(a)}\abs{\dif u}^2 +r^{2-m}\int_{t_0-4r^2}^{t_0} \int_{B_{2r}(a)}\abs{\partial_tu}^2}<\infty.
\end{equation}
More precisely, the two quantities in \eqref{eq:gaussian-condition} and
\eqref{eq:local-spacetime-condition} bound each other up to a constant depending
only on $m$.
\end{prop}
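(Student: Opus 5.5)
We prove the two implications separately, keeping track of constants depending only on $m$.

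\emph{From \eqref{eq:gaussian-condition} to \eqref{eq:local-spacetime-condition}.} Suppose $\sup_{t<0}\Phi(t)<\infty$. Then Proposition~\ref{prop:global} applies with $L=\lim_{t\to-\infty}\Phi(t)\le\sup\Phi$, and Corollary~\ref{cor:local} is available.

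For the first term, integrate \eqref{eq:local-energy} with radius $2r$ over a time interval of length $4r^2$:
\begin{align*}
\int_{t_0-4r^2}^{t_0}\int_{B_{2r}(a)}\abs{\dif u}^2\le 4r^2\,C_mL(2r)^{m-2}=C_mLr^m .
\end{align*}
For the second term, apply \eqref{eq:time-energy} with $s_1=t_0-4r^2$, $s_2=t_0$ and radius $2r$:
\begin{align*}
\int_{t_0-4r^2}^{t_0}\int_{B_{2r}(a)}\abs{\partial_tu}^2\le C_mL\set{(2r)^{m-2}+4r^2(2r)^{m-4}}=C_mLr^{m-2}.
\end{align*}
Multiplying by $r^{-m}$ and $r^{2-m}$ respectively bounds the quantity in \eqref{eq:local-spacetime-condition} by $C_mL\le C_m\sup\Phi$.

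\emph{From \eqref{eq:local-spacetime-condition} to \eqref{eq:gaussian-condition}.} Let $M$ denote the supremum in \eqref{eq:local-spacetime-condition}. Fix $t<0$ and set $r=\sqrt{-t}$. Since $\Phi(t)$ is a single-time quantity, we cannot read it off a space-time average directly. Instead we compare the energy at time $t$ with its time average over $[t-r^2,t]$.

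\textbf{Step 1: a single-time local energy bound.} Let $\eta\in C_0^\infty(B_{2\rho}(a))$ be a cutoff equal to $1$ on $B_\rho(a)$ with $\abs{\nabla\eta}\le C/\rho$. The local energy identity gives
\begin{align*}
\frac{\dif}{\dif s}\int\eta^2\abs{\dif u}^2=-2\int\eta^2\abs{\partial_tu}^2-4\int\eta\hin{\partial_tu}{\dif u(\nabla\eta)}.
\end{align*}
By Young's inequality,
\begin{align*}
\abs{\frac{\dif}{\dif s}\int\eta^2\abs{\dif u}^2}\le C\int_{B_{2\rho}}\abs{\partial_tu}^2+\frac C{\rho^2}\int_{B_{2\rho}}\abs{\dif u}^2.
\end{align*}
For any $s\in[t-\rho^2,t]$ this yields
\begin{align*}
\int_{B_\rho(a)}\abs{\dif u(t)}^2\le\int\eta^2\abs{\dif u(s)}^2+C\int_{t-\rho^2}^{t}\int_{B_{2\rho}}\left(\abs{\partial_tu}^2+\rho^{-2}\abs{\dif u}^2\right).
\end{align*}
Averaging in $s$ over $[t-\rho^2,t]$ gives
\begin{align*}
\int_{B_\rho(a)}\abs{\dif u(t)}^2\le C\rho^{-2}\int_{t-\rho^2}^{t}\int_{B_{2\rho}(a)}\abs{\dif u}^2+C\int_{t-\rho^2}^{t}\int_{B_{2\rho}(a)}\abs{\partial_tu}^2.
\end{align*}
Using \eqref{eq:local-spacetime-condition} with $t_0=t$ and radius $\rho$ (the window $[t-4\rho^2,t]$ contains $[t-\rho^2,t]$), the right-hand side is at most $CM\rho^{m-2}$. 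Hence
\begin{align*}
\int_{B_\rho(a)}\abs{\dif u(x,t)}^2\dif x\le C_mM\rho^{m-2}\qquad\text{for all }a,\ \rho>0 .
\end{align*}

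\textbf{Step 2: summing against the Gaussian.} Take $\rho=r$. Cover $\mathbb R^m$ by the balls $B_r(rk)$, $k\in\mathbb Z^m$, which have bounded overlap. On each such ball, $G_r(x)\le C r^{-m}e^{-c\abs{k}^2}$. Therefore
\begin{align*}
\Phi(t)=\frac{r^2}{2}\int\abs{\dif u(x,t)}^2G_r\le Cr^{2-m}\sum_{k\in\mathbb Z^m}e^{-c\abs{k}^2}\,M r^{m-2}\le C_mM .
\end{align*}
This holds for every $t<0$, so $\sup_{t<0}\Phi\le C_mM$.

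The main obstacle is Step 1: passing from a time average to a pointwise-in-time energy bound. The plan handles it with the localized energy identity, which requires a one-sided time window ending at $t$; \eqref{eq:local-spacetime-condition} supplies exactly such a window since $t_0$ is arbitrary. All constants depend only on $m$, which gives the mutual bounds asserted in the proposition.
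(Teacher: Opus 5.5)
Your proof is correct and follows the paper's route. The forward direction integrates \eqref{eq:local-energy} and \eqref{eq:time-energy} over the cylinder. The converse turns the space-time bound into a single-time Morrey bound via the localized energy identity and then sums against the Gaussian. The paper differs only in small choices: it picks one good slice $s\in(t_0-r^2,t_0)$ and uses the one-sided inequality $\frac{\dif}{\dif t}\int\eta^2\abs{\dif u}^2\le Cr^{-2}\int_{B_{2r}}\abs{\dif u}^2$, so it never needs the $\partial_tu$ term, and it sums over concentric balls $B_{2^{j+1}r}$ rather than a lattice cover.

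One minor slip: the balls $B_r(rk)$, $k\in\mathbb Z^m$, do not cover $\mathbb R^m$ when $m\ge5$, since the point $(r/2,\dots,r/2)$ lies at distance $r\sqrt m/2>r$ from the lattice. Using radius $\sqrt m\,r$ instead fixes this and only changes the $m$-dependent constants.
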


\begin{proof}
Assume first \eqref{eq:gaussian-condition} and put
$L=\lim_{t\to-\infty}\Phi(t)$.  By monotonicity \eqref{eq:monotonicity},
$L=\sup_{t<0}\Phi(t)$. Proposition~\ref{prop:global} and Corollary~\ref{cor:local} give, uniformly in $a,t_0,r$,
\begin{align*}
  r^{-m}\int_{t_0-4r^2}^{t_0} \int_{B_{2r}(a)}\abs{\dif u}^2\le C_mL,
\end{align*}
and
\begin{align*}
r^{2-m}\int_{t_0-4r^2}^{t_0}\int_{B_{2r}(a)}\abs{\partial_tu}^2\le C_mL.
\end{align*}
Thus \eqref{eq:local-spacetime-condition} holds.

Conversely, let $A$ denote the supremum in
\eqref{eq:local-spacetime-condition}.  For fixed $a\in\mathbb R^m$,
$t_0<0$ and $r>0$, choose $s\in(t_0-r^2,t_0)$ so that
\begin{align*}
  \int_{B_{2r}(a)}\abs{\dif u(x,s)}^2\dif x \le r^{-2}\int_{t_0-r^2}^{t_0} \int_{B_{2r}(a)}\abs{\dif u}^2\le Ar^{m-2}.
\end{align*}
Let $\eta\in C_0^\infty(B_{2r}(a)), 0\leq\eta\leq 1$, $\eta=1$ on $B_r(a)$ and $\abs{\dif\eta}\le C/r$.  The local energy inequality gives
\begin{align*}
   \dfrac {d}{dt}\int\eta^2\abs{\dif u}^2
 \le \dfrac {C}{r^2}\int_{B_{2r}(a)}\abs{\dif u}^2.
\end{align*}
After integration from $s$ to $t_0$,
\begin{equation}\label{eq:morrey-from-spacetime}
 \int_{B_r(a)}\abs{\dif u(x,t_0)}^2\dif x\le C_mAr^{m-2}.
\end{equation}
Fix $t<0$ and set $R=\sqrt{-t}$.  On
$B_{2^{j+1}R}\setminus B_{2^jR}$,
\begin{align*}
G_R(x)\le (4\pi R^2)^{-m/2}e^{-4^j/4}.
\end{align*}
Using \eqref{eq:morrey-from-spacetime} on $B_R$ and on the balls
$B_{2^{j+1}R}$ gives
\begin{align*}
\Phi(t)\le C_mA\left(1+\sum_{j=0}^\infty 2^{j(m-2)}e^{-4^j/4}\right)\le C_mA.
\end{align*}
Taking the supremum over $t<0$ proves \eqref{eq:gaussian-condition} and the quantitative equivalence.
\end{proof}

\section{A compact-lift quasi-harmonic sphere}

Fix a compact set $K\subset N$. Without loss of generality, assume $N$ is closed and  fix an isometric embedding $N\hookrightarrow\mathbb R^\nu$.

\begin{lem}\label{lem:compact-lift}
Let $m\ge2$, and let $u:\mathbb R^m\times(-\infty,0)\to K$ be a nonconstant ancient harmonic map heat flow. Assume
\begin{align*}
\sup_{t<0}\Phi(t)<\infty,
\end{align*}
and that $K$ contains no harmonic $k$-sphere for all $2\leq k\leq \max\set{2,m-1}$.
Then $m\ge3$, and for some $3\le\ell\le m$, $K$ contains a
quasi-harmonic $\ell$-sphere
\begin{align*}
v:\mathbb R^\ell\to K.
\end{align*}
Moreover, for every Riemannian covering $\pi:\widetilde N\to N$, every lift $\widetilde v:\mathbb R^\ell\to\widetilde N$ of $v$ has relatively compact image.
\end{lem}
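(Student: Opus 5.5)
Let $L=\lim_{t\to-\infty}\Phi(t)$. Since $u$ is nonconstant, $L>0$, and the plan is a blow-down (tangent flow at $t=-\infty$) combined with a dimension-reduction argument.

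\textbf{Step 1: blow down.} For $\lambda_i\to\infty$ set $u_i(x,s)=u(\lambda_i x,\lambda_i^2 s)$. Each $u_i$ is an ancient flow into $K$ with the same $\Phi$-bound. By Proposition~\ref{prop:gaussian-local-equivalence} and Corollary~\ref{cor:local}, the $u_i$ have uniformly bounded scaled energy on parabolic cylinders. The monotonicity formula \eqref{eq:monotonicity} for $u_i$ gives
\begin{align*}
\int_{-2}^{-1}(-s)\int\abs{\partial_su_i+\tfrac1{2s}x\cdot\dif u_i}^2G_{\sqrt{-s}}=\Phi(-\lambda_i^2)-\Phi(-2\lambda_i^2)\to0 .
\end{align*}
Pass to a weak limit $u_\infty$. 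By Proposition~\ref{prop:global}, the quantities $\Phi_{a,T}$ for $u_\infty$ all equal $L$, in the sense of limits of the approximating flows. The limit is therefore backward self-similar about $(0,0)$, and by the translation invariance of the limit \eqref{eq:all-center-limit} it is in fact self-similar about every point $(a,0)$.

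\textbf{Step 2: smooth convergence away from concentration.} If the convergence is strong and smooth, this invariance forces $u_\infty(x,-1)=v(x)$ with $v$ invariant under all translations $a$ (via self-similarity about $(a,0)$). Hence $\dif v=0$, and so $\Phi(u_\infty)=0\neq L$. This is impossible, so energy must concentrate, or else $\Phi$ is lost in the tail. The tail loss is excluded by \eqref{eq:tail}. The main tool is the $\epsilon$-regularity for the heat flow (Chen--Struwe/Lin--Wang style) at the scale given by the Morrey bounds of Proposition~\ref{prop:gaussian-local-equivalence}. Concentration produces a defect measure $\mu$ on an $(m-2)$-rectifiable set that is invariant under translations and parabolic dilations. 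Lin--Wang's analysis shows that bubbling at such points produces either a harmonic $k$-sphere for some $2\le k\le m$, or a quasi-harmonic sphere, through the standard blow-up at concentration points. Harmonic spheres in dimension $\le \max\{2,m-1\}$ are excluded by hypothesis. I expect \emph{this step to be the main obstacle}: one has to arrange, by dimension reduction, that the first non-smooth blow-up yields either a harmonic map on $\mathbb R^k$ with $k\le m-1$ (excluded), or a nonconstant self-similar profile $v:\mathbb R^\ell\to K$, smooth, after splitting off $m-\ell$ translation-invariant directions. Concretely, I would iterate: pick the blow-down limit; if it is smooth and nonconstant with dependence on exactly $\ell$ variables, it is the desired quasi-harmonic $\ell$-sphere; if singular, blow up at a point of the singular set of maximal translation invariance (Federer dimension reduction), which strictly reduces the number of effective variables. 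Stationary (time-independent) tangents give harmonic maps, hence harmonic spheres of dimension $\le m-1$ after reduction, and these are excluded. The case $\ell\le2$ is excluded because quasi-harmonic spheres in dimension $\le2$ are constant (finite weighted energy and conformal invariance give harmonic spheres, excluded at $k=2$). In particular $m\ge3$. Finiteness of \eqref{eq:finite-gaussian-intro} for $v$ follows from the $\Phi$ bound.

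\textbf{Step 3: compact lifts.} Since $v$ is smooth with bounded weighted energy, the local bound \eqref{eq:local-energy} together with self-similarity gives $\abs{\dif v(x)}\le C/(1+\abs{x})$ at large $\abs{x}$. This follows from $\epsilon$-regularity applied to the self-similar flow $v(x/\sqrt{-t})$ near $t=-1$ and $\abs{x}$ large, where the energy at scale $\abs{x}^{-1}$ is small by \eqref{eq:tail}. In fact one gets the sharper statement that $x\cdot\dif v=2\tau(v)$ and the radial derivative decays. Then, with $v(r\theta)$ along rays, $\int_1^\infty\abs{\partial_rv}\,dr<\infty$, because $\partial_r v=O(r^{-3})$ via the equation $\tfrac r2\partial_rv=\tau(v)$ and $\abs{\tau(v)}\le C r^{-2}$ by the same $\epsilon$-regularity on higher derivatives. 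Hence $v(r\theta)\to v_\infty(\theta)$ uniformly. The curves $\widetilde v(r\theta)$ in $\widetilde N$ then have uniformly bounded length, and on a fixed compact ball $\widetilde v$ is bounded since $\mathbb R^\ell$ is simply connected. Therefore $\widetilde v(\mathbb R^\ell)$ lies in a bounded set, which is relatively compact because $\pi$ is a covering of the compact set $K$.
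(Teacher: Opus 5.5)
Your proposal has a genuine gap at exactly the point the compact-lift conclusion depends on. Separately, Steps~1--2 rest on a false invariance claim.

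\textbf{Decay of $v$ at infinity.} In Step~3 you get $\abs{\dif v(x)}\le C/\abs{x}$ from $\varepsilon$-regularity, with smallness supplied by \eqref{eq:tail}. That cannot work. \eqref{eq:tail} weighs energy by the Gaussian centred at the origin, which is of size $e^{-R^2/4}$ near a point $x_0$ with $\abs{x_0}=R$. So it only bounds $\int_{B_{1/R}(x_0)}\abs{\dif v}^2$ by something of order $e^{R^2/8}$. The only scale-invariant local control you have, \eqref{eq:local-energy}, is a Morrey bound, not a small one.

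Through $w(x,t)=v(x/\sqrt{-t})$ and $\dif w(\theta,-R^{-2})=R\,\dif v(R\theta)$, the estimate you need is equivalent to regularity of $w$ up to $t=0$ near $\mathbb S^{\ell-1}$. That is, you need $\lim_{r\downarrow0}\Phi_{\xi,0}(-r^2)=0$ uniformly in $\xi\in\mathbb S^{\ell-1}$. This is the key step of the lemma, and nothing in your outline yields it.

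The paper chooses the quasi-harmonic sphere with $\ell$ \emph{minimal} and blows $w$ up at $(\xi,0)$. The limit is self-similar, and the exact identity $\partial_\xi w_i=-r_i(x\cdot\dif w_i+2t\partial_tw_i)$ forces it to be invariant in the $\xi$-direction. It is therefore an $(\ell-1)$-dimensional quasi-harmonic sphere, or, if singular, the reduction produces excluded lower-dimensional objects; either way minimality is contradicted. For $\ell=3$ the limit is a smooth two-dimensional self-similar flow, which Proposition~\ref{prop:moment} forces to be constant (its $\Phi$-term carries the factor $m-2=0$). Continuity in $\xi$ and Dini's theorem give uniformity, and only then does Lin--Wang's Lemma~3.3 apply.

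Once this is available, your radial argument is correct and somewhat more direct than the paper's. Boundedness of $w_t$ near $(\theta,0)$, together with $w_t(\theta,-R^{-2})=\tfrac{R^3}{2}\partial_rv(R\theta)$, gives $\abs{\partial_rv(R\theta)}\le CR^{-3}$. This replaces the $L^2$ radial integral and the Lipschitz-in-$\theta$ argument. You still need completeness to pass from bounded to relatively compact.

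\textbf{Self-similarity about every $(a,0)$.} Proposition~\ref{prop:global} concerns a fixed centre $(a,T)$ as $t\to-\infty$. After blow-down by $\lambda_i$, that centre becomes $(a/\lambda_i,T/\lambda_i^2)\to(0,0)$. The density of $u_i$ at $(a,0)$ is $\Phi_{\lambda_ia,0}(\lambda_i^2s)$, whose centre recedes at the parabolic rate, and nothing controls it. For example, if $u=v(x/\sqrt{-t})$ is itself self-similar, then $u_i=u$ and $\Phi_{a,0}(s)=\tfrac12\int\abs{\dif v(y)}^2G_1(y-a/\sqrt{-s})\dif y$, which depends on $a$.

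So ``energy must concentrate'' is unfounded, and in fact backwards. Once $K$-valued harmonic $\mathbb S^2$'s are excluded, Lin--Wang's compactness gives strong $H^1_{\rm loc}$ convergence. The blow-down is then a nonconstant self-similar flow with profile energy $L$, and the dimension reduction is applied to it when it is singular.

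Likewise, two-dimensional quasi-harmonic spheres are not constant by conformal invariance; the weighted equation is not conformally invariant. The correct reason is the moment identity above, which is also how the paper proves $m\ge3$.
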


\begin{proof}
\noindent\textbf{Step 1.  Extraction of a least-dimensional quasi-harmonic sphere.}

Set
\begin{align*}
L=\lim_{t\to-\infty}\Phi(t).
\end{align*}
Then $0<L<\infty$.  Indeed, monotonicity \eqref{eq:monotonicity} gives $0\le\Phi(t)\le L$ for every $t<0$, while $L=0$ would imply $\dif u\equiv0$ and hence $\partial_tu=\tau(u)=0$.

Choose $R_i\to\infty$ and put
\begin{align*}
 u_i(x,t)=u(R_i x,R_i^2t).
\end{align*}
For every $0<a<b$,
\begin{align*}
\Phi(R_i^2t)\to L, \qquad \Phi(-bR_i^2)-\Phi(-aR_i^2)\to0.
\end{align*}
For $x_0\in\mathbb R^m$, $t_0<0$ and $r>0$, consider the recalling
\begin{align*}
\widehat  u_i(y,s)=u_i(x_0+ry,t_0+r^2s), \qquad (y,s)\in B_2\times(-4,0).
\end{align*}
Proposition~\ref{prop:gaussian-local-equivalence} gives the scale-invariant
bound
\begin{equation}\label{eq:uniform-local-H1}
 \int_{-4}^0 \int_{B_2}
 \left(\abs{\dif\widehat u_i}^2+\abs{\partial_s\widehat u_i}^2\right) \le C_mL,
\end{equation}
uniformly in $i,x_0,t_0,r$.  Hence, after a diagonal subsequence,
\begin{align*}
    u_i\rightharpoonup u_\infty\quad\hbox{in }H^1_{\rm loc}, \qquad u_i\to u_\infty\quad\hbox{in }L^2_{\rm loc}.
\end{align*}
The limit is $K$-valued almost  everywhere.  We now invoke the compactness argument in \cite[Proposition~4.1, p.~424]{Lin:1999ab}.  If the convergence were not strong in $H^1_{\rm loc}$, then a step-by-step examination of the proof of Claims~1--7 there would produce a nonconstant ancient two-dimensional flow
\begin{align*}
    q:\mathbb R^2\times(-\infty,0)\to K
\end{align*}
satisfying
\begin{equation}\label{eq:LW-terminal-bounds}
 \sup_{s<0}\int_{\mathbb R^2}\abs{\dif q(\cdot,s)}^2<\infty, \qquad  \sup_{\mathbb R^2\times(-\infty,0)}\abs{\dif q}<\infty, \qquad \int_{-\infty}^0 \int_{\mathbb R^2}\abs{\partial_sq}^2<\infty.
\end{equation}
These are exactly the hypotheses of Claim~8 of Lin--Wang.  That claim produces times $s_j\to-\infty$ and a finite-energy harmonic limit $q_{-\infty}:\mathbb R^2\to K$ and concludes, using the energy inequality,
that $q$ is constant whenever $q_{-\infty}$ is constant \cite[pp.~422--423]{Lin:1999ab}.  The removable singularity theorem extends $q_{-\infty}$ across infinity to a harmonic map $\mathbb S^2\to N$ with image in $K$; by hypothesis it is constant.  Hence Claim~8 contradicts the
choice of $q$.  Consequently
\begin{equation}\label{eq:strong-blowdown}
 u_i\to u_\infty \quad\hbox{strongly in }H^1_{\rm loc}
 (\mathbb R^m\times(-\infty,0)).
\end{equation}

By the monotonicity formula \eqref{eq:monotonicity},
\begin{align*}
    \int_{-b}^{-a}(-t)\int
 \abs{\partial_tu_i+\dfrac {1}{2t}x\cdot\dif u_i}^2G_{\sqrt{-t}}
 \to0.
\end{align*}
Passing to the limit in view of \eqref{eq:strong-blowdown}, \begin{equation}\label{eq:selfsimilar-blowdown}
 x\cdot\dif u_\infty+2t\partial_tu_\infty=0,
 \qquad
 u_\infty(x,t)=V \left(\dfrac {x}{\sqrt{-t}}\right).
\end{equation}
Moreover,
\begin{align*}
     \int_{-b}^{-a}\Phi(R_i^2t)\,\dif t\to(b-a)L.
\end{align*}
Strong convergence on fixed balls and the uniform Gaussian tail estimate \eqref{eq:tail} therefore imply
\begin{equation}\label{eq:profile-energy}
 \dfrac 12\int_{\mathbb R^m}\abs{\dif V}^2G_1\,\dif x=L>0.
\end{equation}
Thus $V$, and hence $u_\infty$, is  nonconstant.
If $m=2$, Proposition~\ref{prop:moment} and the monotonicity formula give
\begin{align*}
 \int_{-b}^{-a} \int \abs{x}^2\abs{\dif u_i}^2G_{\sqrt{-t}}\,\dif x\dif t \le16b\,[\Phi(-bR_i^2)-\Phi(-aR_i^2)]\to0.
 \end{align*}
Together with \eqref{eq:strong-blowdown} on every annulus this yields
$\dif V=0$, contradicting \eqref{eq:profile-energy}.  Hence $m\ge3$.

We next use the local dimension reduction of Lin--Wang.  All its analytic hypotheses are already available: \eqref{eq:uniform-local-H1}, the three properties (3.11), (3.11'') and (3.12) of \cite{Lin:1999ab}, and the strong local
compactness just proved.  Thus the  argument of \cite[Theorem~4.3, pp.~425--427]{Lin:1999ab}, applied to a singular self-similar limit, yields one of the two alternatives
\begin{equation}\label{eq:LW-alternative}
 \begin{cases}
 \omega:\mathbb S^k\to K \text{ is a nonconstant harmonic map},
     &2\le k\le m-1,\\[2mm]
 v:\mathbb R^k\to K \text{ is a nonconstant quasi-harmonic sphere},
     &3\le k\le m.
 \end{cases}
\end{equation}
The first alternative in \eqref{eq:LW-alternative} is excluded
by hypothesis. If $u_\infty$ is smooth, its profile $V$ in \eqref{eq:selfsimilar-blowdown} is already the second alternative with
$k=m$. We may therefore choose a nonconstant quasi-harmonic sphere
of least possible dimension and denote it by
\begin{equation}\label{eq:minimal-qhs}
 v:\mathbb R^\ell\to K,
 \qquad 3\le\ell\le m.
\end{equation}
Every map in the reduction is a strong local limit of parabolic rescalings
of $K$-valued maps; hence its image is contained in $K$. Choose such a quasi-harmonic sphere with $\ell$ minimal.

\vspace{2ex}

\noindent\textbf{Step 2. Vanishing of the terminal densities.}

Let
\begin{align*}
w(x,t)=v \left(\dfrac {x}{\sqrt{-t}}\right),\qquad t<0.
\end{align*}
For this flow, $\Phi_{a,T}$ has the meaning fixed in Section~2.  When the
Euclidean dimension changes below, $G_r$ always denotes the Gaussian kernel in the current dimension.  We claim that
\begin{equation}\label{eq:terminal-zero}
 \lim_{r\downarrow0}\Phi_{\xi,0}(-r^2)=0 \qquad (\xi\in\mathbb S^{\ell-1}).
\end{equation}
Assume otherwise.  For some $\xi\in\mathbb S^{\ell-1}$,
\begin{align*}
\Lambda:=\lim_{r\downarrow0}\Phi_{\xi,0}(-r^2)>0.
\end{align*}
Choose $r_i\downarrow0$ and set
\begin{align*}
    w_i(x,t)=w(\xi+r_i x,r_i^2t).
\end{align*}
For every $0<a<b$,
\begin{align}
 \dfrac{-t}{2}\int \abs{\dif w_i(x,t)}^2G_{\sqrt{-t}}(x)\,\dif x
 &=\Phi_{\xi,0}(r_i^2t)\to\Lambda,
 \label{eq:terminal-energy}\\
 \int_{-b}^{-a}(-t)\int
 \abs{\partial_tw_i+\dfrac {1}{2t}x\cdot\dif w_i}^2G_{\sqrt{-t}}
 &=\Phi_{\xi,0}(-br_i^2)-\Phi_{\xi,0}(-ar_i^2)
 \to0.
 \label{eq:terminal-defect}
\end{align}
The same compactness argument as above gives, after a subsequence,
\begin{align*}
w_i\to\bar w \quad\hbox{strongly in }H^1_{\rm loc}.
\end{align*}
From \eqref{eq:terminal-defect},
\begin{align*}
x\cdot \dif \bar w+2t\bar w_t=0, \qquad \bar w(x,t)=\bar v \left(\dfrac {x}{\sqrt{-t}}\right).
\end{align*}
Passing to the limit in \eqref{eq:terminal-energy}, first on fixed balls
and then using the Gaussian tail estimate, gives
\begin{equation}\label{eq:terminal-profile-energy}
 \dfrac 12\int_{\mathbb R^\ell}\abs{\dif\bar v}^2G_1\,\dif x=\Lambda>0.
\end{equation}
Since the original flow $w$ is self-similar,
\begin{align*}
     x\cdot\dif w+2tw_t=0.
\end{align*}
Evaluating this identity at $(\xi+r_i x,r_i^2t)$ yields the exact relation
\begin{equation}\label{eq:direction-loss}
 \partial_\xi w_i =-r_i\left(x\cdot\dif w_i+2t\partial_tw_i\right).
\end{equation}
Hence for every compact cylinder
$Q\Subset\mathbb R^\ell\times(-\infty,0)$,
\begin{align*}
   \int_Q\abs{\partial_\xi w_i}^2
 \le C_Qr_i^2\int_Q\left(\abs{\dif w_i}^2+\abs{\partial_tw_i}^2\right) \to0.
\end{align*}
Thus $\partial_\xi\bar w=0$. Identifying $\xi^\perp$ with $\mathbb R^{\ell-1}$ and using the same symbols for the induced flow and its profile, Fubini's theorem gives
\begin{equation}\label{eq:dimension-drop-energy}
 \dfrac 12\int_{\mathbb R^{\ell-1}}\abs{\dif\bar v}^2G_1\,\dif y
 =\Lambda>0.
\end{equation}

If $\ell\ge4$ and $\bar w$ is smooth, then $\bar v$ itself is a quasi-harmonic sphere of dimension $\ell-1$, contradicting the minimality in \eqref{eq:minimal-qhs}.  If $\bar w$ is singular, the same localized Lin--Wang reduction gives
\begin{align*}
 \begin{cases}
 \mathbb S^k\to K\text{ nonconstant harmonic},&2\le k\le\ell-2,\\
 \mathbb R^k\to K\text{ nonconstant quasi-harmonic},&3\le k\le\ell-1,
 \end{cases}
\end{align*}
again impossible.  It remains to consider $\ell=3$.  Then $\bar w$ is a
weak two-dimensional flow.  Under the exclusion of a $K$-valued harmonic
$\mathbb S^2$, the two-dimensional regularity argument in \cite[Proposition~4.2, p.~424] {Lin:1999ab} gives that $\bar w$ is smooth.  Applying Proposition~\ref{prop:moment} in dimension two and using exact self-similarity,
\begin{align*}
     y\cdot \dif \bar w+2t\bar w_t=0,
\end{align*}
we obtain, for almost every $t<0$,
\begin{align*}
\int_{\mathbb R^2}\abs{y}^2\abs{\dif \bar w(y,t)}^2G_{\sqrt{-t}}(y)\,\dif y
 \le4\int_{\mathbb R^2}\abs{y\cdot \dif \bar w+2t\bar w_t}^2
 G_{\sqrt{-t}}\,\dif y=0.
\end{align*}
Hence $\dif \bar w=0$, contradicting \eqref{eq:dimension-drop-energy}.
This proves \eqref{eq:terminal-zero}.

\vspace{2ex}

\noindent\textbf{Step 3. Decay at infinity and radial energy.}

For fixed $r>0$ and $\abs{\xi}=1$,
\begin{align*}
    G_r(x-\xi)\le2^{\ell/2}e^{1/(4r^2)}G_{\sqrt2r}(x).
\end{align*}
Proposition~\ref{prop:global} therefore gives an integrable majorant, so $\xi\mapsto\Phi_{\xi,0}(-r^2)$ is continuous.  Choose any sequence $r_j\downarrow0$.  By \eqref{eq:terminal-zero} and monotonicity,
\begin{align*}
    \Phi_{\xi,0}(-r_j^2)\downarrow0
 \qquad\hbox{for every }\xi\in\mathbb S^{\ell-1}.
\end{align*}
Dini's theorem gives
\begin{align*}
     \sup_{\xi\in\mathbb S^{\ell-1}}\Phi_{\xi,0}(-r_j^2)\to0.
\end{align*}
Since $r\mapsto\Phi_{\xi,0}(-r^2)$ is monotone, the limit is independent of the chosen sequence, and hence
\begin{equation}\label{eq:terminal-uniform}
 \sup_{\xi\in\mathbb S^{\ell-1}}\Phi_{\xi,0}(-r^2)
 \to0\qquad(r\downarrow0).
\end{equation}

Let $\varepsilon_0$ and $\delta\le1$ be the constants in \cite[Lemma~3.3, p.~413]{Lin:1999ab}.  Choose $r>0$ later.  For $\theta\in\mathbb S^{\ell-1}$ and $-\delta^2r^2<t_0<0$, put
\begin{equation}\label{eq:regularity-center}
 \tau=\dfrac {t_0}{2}-\delta^2r^2.
\end{equation}
Then
\begin{align*}
0<t_0-\tau<\delta^2r^2,
 \qquad
 \tau+\delta^2r^2=\dfrac {t_0}{2}<0,
\end{align*}
so $(\theta,t_0)\in P_{\delta r}(\theta,\tau)$ and the whole cylinder is contained in negative time.  For $r\le\lambda\le2r$, set $s=\tau-\lambda^2$. Since $-\tau\le\frac 32\delta^2r^2$,
\begin{align*}
 \dfrac {\lambda^2G_\lambda(x-\theta)}
 {(-s)G_{\sqrt{-s}}(x-\theta)}=
 \left(\dfrac {\lambda^2-\tau}{\lambda^2}\right)^{(\ell-2)/2}
 \exp \left[-\dfrac {\abs{x-\theta}^2}{4} \left(\dfrac 1{\lambda^2}-\dfrac 1{\lambda^2-\tau}\right)\right]\le3^{(\ell-2)/2}.
\end{align*}
Moreover $s\ge-6r^2$.  Hence, by monotonicity,
\begin{equation}\label{eq:center-comparison}
 \Phi_{\theta,\tau}(\tau-\lambda^2)
 \le3^{(\ell-2)/2}\Phi_{\theta,0}(-6r^2), \qquad r\le\lambda\le2r.
\end{equation}
Let $0\le\eta\le1$ be the fixed spatial cutoff in Lin--Wang's Lemma~3.3, chosen identically for $\theta\in\mathbb S^{\ell-1}$. From
\eqref{eq:center-comparison},
\begin{align}\label{eq:LW-smallness}
\begin{split}
    &\int_{\tau-4r^2}^{\tau-r^2} \int
 \eta^2\abs{\dif w}G_{\sqrt{\tau-s}}(x-\theta)\,\dif x\dif s\\
 \le&4\int_r^{2r}
 \Phi_{\theta,\tau}(\tau-\lambda^2)\dfrac {\dif\lambda}{\lambda}\\
 \le&4\cdot3^{(\ell-2)/2}\log2\,
 \sup_{\zeta\in\mathbb S^{\ell-1}}\Phi_{\zeta,0}(-6r^2).
\end{split}
\end{align}
By \eqref{eq:terminal-uniform}, fix $r>0$ so small that the last member of
\eqref{eq:LW-smallness} is less than $\varepsilon_0^2$.  Lemma~3.3 then
gives
\begin{align*}
\abs{\dif w(\theta,t_0)}\le\dfrac Cr \qquad (\theta\in\mathbb S^{\ell-1},\ -\delta^2r^2<t_0<0).
\end{align*}
Since
\begin{align*}
\dif w(\theta,-R^{-2})=R\,\dif v(R\theta),
\end{align*}
we obtain, after increasing the constant once and for all,
\begin{equation}\label{eq:decay}
 \abs{\dif v(R\theta)}\le\dfrac  CR, \qquad R\ge R_0,
\end{equation}
for some fixed $R_0$.

Corollary~\ref{cor:local} gives
$\int_{-1}^0\int_{B_1}\abs{w_t}^2<\infty$.  With $h=-t$ and $y=x/\sqrt h$,
\begin{align*}
w_t(x,-h)=\dfrac 1{2h}\,y\cdot \dif v(y).
\end{align*}
Writing $y=r\theta$ and applying Tonelli's theorem,
\begin{align} \label{eq:radial-identity}
\begin{split}
 &\int_{-1}^0 \int_{B_1}\abs{w_t}^2\,\dif x\dif t\\
 =&\dfrac 14\int_0^1h^{\ell/2-2}
 \int_0^{h^{-1/2}}r^{\ell+1}
 \int_{\mathbb S^{\ell-1}}\abs{v_r(r\theta)}^2\,\dif\theta\dif r\dif h\\
 =&\dfrac 1{2(\ell-2)}\left[ \int_0^1r^{\ell+1}\int_{\mathbb S^{\ell-1}} \abs{v_r(r\theta)}^2\,\dif\theta\dif r
 +\int_1^\infty r^3\int_{\mathbb S^{\ell-1}} \abs{v_r(r\theta)}^2\,\dif\theta\dif r\right].
 \end{split}
\end{align}
Thus
\begin{equation}\label{eq:radial-integral}
 \int_1^\infty r^3\int_{\mathbb S^{\ell-1}} \abs{v_r(r\theta)}^2\,\dif\theta\dif r<\infty.
\end{equation}

\vspace{2ex}
\noindent\textbf{Step 4. Compactness of the lifted image.}

Fix a Riemannian covering $\pi:\widetilde N\to N$ and a lift
$\widetilde v$.  Since $\pi$ is a local isometry, \eqref{eq:decay} gives
\begin{equation}\label{eq:lift-derivatives}
 \abs{\partial_R\widetilde v(R\theta)}=\abs{v_R(R\theta)}, \qquad \abs{d_\theta\widetilde v(R\theta)}\le C \quad(R\ge R_0).
\end{equation}
For fixed $s\ge R\ge R_0$, let
\begin{align*}
F(\theta)=d_{\widetilde N} \left(\widetilde v(s\theta),\widetilde v(R\theta)\right).
\end{align*}
Then $\operatorname{Lip}F\le2C$, and
\begin{align}\label{eq:lift-L2}
\begin{split}
\int_{\mathbb S^{\ell-1}}F^2\,\dif\theta\le&\int_{\mathbb S^{\ell-1}}
\left(\int_R^s\abs{v_r(r\theta)}\,\dif r\right)^2\dif\theta \\
 \le&\left(\int_R^sr^{-3}\,\dif r\right) \left(\int_R^sr^3\int_{\mathbb S^{\ell-1}} \abs{v_r(r\theta)}^2\,\dif\theta\dif r\right) \\
 \le&\dfrac 1{2R^2}\int_R^\infty r^3
 \int_{\mathbb S^{\ell-1}}\abs{v_r(r\theta)}^2\,\dif\theta\dif r \to0
\end{split}
\end{align}
uniformly for $s\ge R$. If $\norm{F}_{L^\infty}>0$ and
$F(\theta_0)=\norm{F}_{L^\infty}$, then $F\ge \norm{F}_{L^\infty}/2$ on the geodesic ball
\begin{align*}
B_{\min\set{1,\norm{F}_{L^\infty}/(4C)}}^{\mathbb S^{\ell-1}}(\theta_0).
\end{align*}
Therefore, for a dimensional constant $c_\ell>0$,
\begin{equation}\label{eq:L2-Linf}
 \norm{F}_{L^2}^2 \ge c_\ell M^2\min\set{1,\left(\dfrac {\norm{F}_{L^\infty}}{4C}\right)^{\ell-1}}.
\end{equation}
Equations \eqref{eq:lift-L2}--\eqref{eq:L2-Linf} imply
\begin{equation}\label{eq:lift-tail}
 \sup_{s\ge R}\sup_{\theta\in\mathbb S^{\ell-1}}
 d_{\widetilde N}\left(\widetilde v(s\theta),\widetilde v(R\theta)\right)
 \to0.
\end{equation}

It remains to prove that the connected component of $\pi^{-1}(K)$ containing
$\widetilde v(\mathbb R^\ell)$ is complete.  Let $(z_j)$ be a Cauchy sequence in that component.  Since $\pi$ is distance nonincreasing,
\begin{align*}
     d_N(\pi z_j,\pi z_k)\le d_{\widetilde N}(z_j,z_k),
\end{align*}
so, after passage to the limit in the compact set $K$, $\pi z_j\to z\in K$.  Choose an evenly covered neighborhood $U$ of $z$ and $r_*>0$ such that $B_N(z,4r_*)\subset U$.  For some fixed large $j_0$ and all large $j$,
\begin{align*}
  \pi z_j\in B_N(z,r_*), \qquad d_{\widetilde N}(z_j,z_{j_0})<r_*.
\end{align*}
A curve joining $z_{j_0}$ to $z_j$ with length $<2r_*$ projects into
$B_N(z,3r_*)\subset U$.  Hence all sufficiently large $z_j$ lie in the same sheet $\widetilde U$ of $\pi^{-1}(U)$, and
\begin{align*}
z_j=(\pi|_{\widetilde U})^{-1}(\pi z_j)
 \to(\pi|_{\widetilde U})^{-1}(z).
\end{align*}
Thus the component is complete.  Finally, \eqref{eq:lift-tail} implies that for every $\varepsilon>0$ and all sufficiently large $R$,
\begin{align*}
 \widetilde v(\mathbb R^\ell\setminus B_R)
 \subset\mathcal N_{\varepsilon/2}
 \left(\widetilde v(\partial B_R)\right).
\end{align*}
The compact set $\widetilde v(\overline B_R)$ has a finite
$\varepsilon/2$-net, which is then an $\varepsilon$-net for the whole image.
Hence the image is totally bounded in a complete metric space and therefore
has compact closure.
\end{proof}

Now we can state the rigidity result for ancient harmonic map heat flows. 
\begin{proof}[Proof of Theorem~\ref{thm:ancient}]
If $\omega:\mathbb S^k\to N$, $k\ge2$, is harmonic with image in $K$, then, since $k\ge2$, it lifts to
$\widetilde\omega:\mathbb S^k\to\widetilde N$.  By the
convex-supporting hypothesis there is a $C^2$ function $\rho$ on a
neighborhood of $\widetilde\omega(\mathbb S^k)$ with positive-definite
Hessian.  Hence, for some $c>0$,
\begin{align*}
   0=\int_{\mathbb S^k}\Delta(\rho\circ\widetilde\omega)
 =\int_{\mathbb S^k}\widetilde\nabla^2\rho
 (d\widetilde\omega,d\widetilde\omega)
 \ge c\int_{\mathbb S^k}\abs{\dif\widetilde\omega}^2,
\end{align*}
so every such harmonic sphere is constant.

If $u$ were nonconstant, then Lemma~\ref{lem:compact-lift} would give a nonconstant quasi-harmonic sphere $v:\mathbb R^\ell\to K$, $3\le\ell\le m$, whose lift $\widetilde v$ has compact image closure.  Choose a strictly convex
function $\rho$ on a neighborhood of that closure and put $f=\rho\circ\widetilde v$.  Since $\pi$ is a local isometry,
\begin{align*}
  \left(\Delta-\dfrac 12x\cdot\nabla\right)f  =\widetilde\nabla^2\rho(d\widetilde v,d\widetilde v) \ge c\abs{\dif v}^2
\end{align*}
for some $c>0$.  Since the lifted image has compact closure, $f$ is bounded.  Choose $\eta_R=1$ on $B_R$, supported in $B_{2R}$, with $\abs{\nabla\eta_R}\le C/R$ and $\abs{\Delta\eta_R}\le C/R^2$.
Writing $\mathcal L=\Delta-\dfrac 12x\cdot\nabla$, Gaussian integration by parts gives
\begin{align*}
   \int \eta_R\mathcal L f\,G_1
 =\int f\mathcal L\eta_R\,G_1.
\end{align*}
On $B_{2R}\setminus B_R$ we have
$\abs{\mathcal L\eta_R}\le C$, hence
\begin{align*}
    0\le c\int_{B_R}\abs{\dif v}^2G_1
 \le\abs{\int f\mathcal L\eta_R\,G_1} \le C\norm{f}_{\infty} \int_{B_{2R}\setminus B_R}G_1\,\dif x
 \to0.
\end{align*}
Thus $\dif v=0$, a contradiction.  Hence $u$ is constant.
\end{proof}

Finally, we give a proof of the nonexistence result for quasi-harmonic spheres. 
\begin{proof}[Proof of Theorem ~\ref{thm:shrinker}]
Let $K=\overline{v(\mathbb R^m)}$. For
\begin{align*}
    u(x,t)=v \left(\dfrac {x}{\sqrt{-t}}\right),\qquad
 y=\dfrac {x}{\sqrt{-t}},
\end{align*}
we have
\begin{align*}
    \partial_tu=\dfrac 1{2(-t)}y\cdot \dif v(y)
 =\dfrac 1{-t}\tau(v)(y)=\tau(u),
\end{align*}
and
\begin{align*}
    (-t)\int_{\mathbb R^m}\abs{\dif u(x,t)}^2G_{\sqrt{-t}}(x)\,\dif x=\dfrac 12\int_{\mathbb R^m}\abs{\dif v}^2G_1\,\dif x<\infty.
\end{align*}
Theorem~\ref{thm:ancient} applies.
\end{proof}

\noindent\textbf{Acknowledgments.}

J. Li is supported by NSFC No. 12531002, 12431004, 11721101.
L. Sun acknowledges support from the National Natural Science Foundation of China (Grant Nos.~12571055 and 12671062), the Natural Science Foundation of Hunan Province (Grant No.~2026JJ20014), the 111 Project (Grant No.~D23017), and the Program for
Science and Technology Innovative Research Team in Higher Educational Institutions of Hunan Province, China.

\vspace{2ex}

\noindent\textbf{Declaration on AI assistance.}
The authors used AI models, principally ChatGPT 5.6, to assist in the development of Step~2 in the proof of Lemma~3.1 which was independently verified and completed by the authors, who take full responsibility for the content.

%\bibliographystyle{amsplain} %
%\bibliography{LS}

\providecommand{\bysame}{\leavevmode\hbox to3em{\hrulefill}\thinspace}
\providecommand{\MR}{\relax\ifhmode\unskip\space\fi MR }
% \MRhref is called by the amsart/book/proc definition of \MR.
\providecommand{\MRhref}[2]{%
  \href{http://www.ams.org/mathscinet-getitem?mr=#1}{#2}
}
\providecommand{\href}[2]{#2}

\end{document}